\newcommand{\eps}{\varepsilon}
\newcommand{\A}{\mathcal A}
\newcommand{\Q}{\mathcal Q}
\newcommand{\Oq}{\overline{\mathcal Q}}
\newcommand{\R}{\mathcal R}
\newcommand{\im}{\mbox{\rm Im}\,}
\newcommand{\re}{\mbox{\rm Re}\,}
\def\qed{{\hfill{\vrule height5pt width3pt depth0pt}\medskip}}
\newcommand{\proof}{{\bf Proof.\ \ }}
\newtheorem{theorem}{Theorem}
\newtheorem{lemma}[theorem]{Lemma}
\newtheorem{cor}{Corollary}
\newtheorem{definition}{Definition}
\newtheorem{remark}[theorem]{Remark}
\def\CC{\mathbb{C}}
\def\NN{\mathbb{N}}
\def\DD{\mathbb{D}}
\def\OO{{\mathcal O}}
\def\eps{\varepsilon}
\DeclareMathOperator{\arctanh}{arctanh}
\renewcommand{\phi}{\varphi}
\title{New lower bound estimates for quadratures  of bounded analytic functions}
\author{Ma{\l}gorzata Moczurad, Piotr Zgliczy\'nski  and W{\l}odzimierz Zwonek}
\begin{document}

\maketitle
\begin{center}
 Jagiellonian University, \\
 Faculty of Mathematics and Informatics, \\
{\L}ojasiewicza 6, 30--348  Krak\'ow, Poland \\

 \vskip 0.5cm
e-mail: malgorzata.moczurad@ii.uj.edu.pl,
piotr.zgliczynski@ii.uj.edu.pl, wlodzimierz.zwonek@im.uj.edu.pl

\vskip 0.5cm
\end{center}

\begin{abstract}
We give an improved lower bound for the error of any quadrature formula for calculating $\int_{-1}^1 f(x) d\alpha(x)$, where the functions $f$ are bounded and analytic in the neighborhood of [-1,1] and  $\alpha$  is finite absolutely continuous Borel measure.
\end{abstract}

\paragraph{Keywords:}  quadrature errors, extremal problems for analytic functions
\paragraph{AMS classification:} 41A55, 65D32, 30C70

\section{Introduction}
\label{sec:intro}


To facilitate the discussion we introduce first the basic notation and definitions.

\subsection{Basic notation and definitions}

\begin{definition}
Let $c > 1$. By $\mathcal{E}_c$ we denote an interior of an ellipse, such that
the foci of $\mathcal{E}_c$ are located at points $\pm 1$ and the
sum of semi-axes is equal to $c$.
\end{definition}

\begin{definition}
Let $D \subset \mathbb{C}$. We call $D$ a {\em nice domain} if it is an open, connected and simply connected set, symmetric with respect to the real axis (i.e.\ if $z \in D$, then $\bar{z} \in D$).
\end{definition}

\begin{definition}
Let $D \subset \mathbb{C}$ be a nice domain.  Let $M \geq 0$.

We will write:
\begin{itemize}
\item[] $\A(D)$ for the set of analytic functions
on $D$ such that $\|f\|=\sup_{z \in D}|f(z)| < \infty$,

\item[] $\A(D,M)$ for the set of analytic functions
on $D$ such that $|f(z)| \leq M$ for $z \in D$,

\item[] $\A_0(D,M)$ for a subset of $\A(D,M)$ consisting of the functions
which are real on the real line.
\end{itemize}
\end{definition}

We denote by $I(f,\alpha)$ the integral
\begin{equation*}
  I(f,\alpha)=\int_{-1}^1 f(x) d\alpha(x),  \label{eq:int-with-weight}
\end{equation*}
where $\alpha$ is a finite Borel measure on $[-1,1]$ which is absolutely continuous with respect to the Lebesgue measure. Usually we  drop $\alpha$
and write $I(f)$, when $\alpha$ is known from the context.  Let $\Q(n,\R)$, where $n \in \mathbb{N}$ and $\R = (r_1, \ldots, r_n)$, $r_1, \ldots, r_n\in \NN\backslash \{0\}$, denote the class of all possible (even non-linear) quadratures that use $n$ nodes $z_1,\dots,z_n \in [-1,1]$ and derivatives of an integrand up to the order $r_j - 1$ for each $z_j$. By $\Oq(n, \R)$ we denote a subclass of $\Q(n,\R)$ containing quadratures of the form
\begin{equation}
 S_\mathcal{R}(f)=\sum_{j=1}^n  \sum_{k=0}^{r_j - 1}  b_{kj} f^{(k)}(z_j). \label{eq:quad-with-der}
\end{equation}
Additionally,  $|\R|$ denotes the sum $r_1 + \ldots + r_n$ and $\R_2 = (2, \ldots, 2)$.

Following \cite{P98} we introduce the following definitions.
\begin{definition}
\label{def:petras-optimal}
Let $D \subset \mathbb{C}$ be open set, such that $[-1,1] \subset D$.
For a given quadrature $Q \in\Q(n,\R)$ the remainder term is defined as
\begin{equation}
  R(f,\alpha)=I(f,\alpha) - Q(f).
\end{equation}
The error constant of $Q$ with respect to $\A(D)$ is given by
\begin{equation}
  \rho(Q,\A(D),\alpha)=\sup_{f \in \A(D) \setminus \{0\}} \frac{|R(f, \alpha)|}{\|f\|}
\end{equation}
and the respective optimal error constant by  $ \rho_n(\A(D),\alpha)$
\begin{equation}
  \rho_n(\A(D),\alpha)=\inf_{Q\in \Q(n,\R_2)} \rho(Q,\A(D),\alpha).
\end{equation}

  A quadrature formula is called \emph{optimal} if its error constant attains $\rho_n(\A(D),\alpha)$.
\end{definition}

Let us stress that in the above definition only $\R_2$ is used to define the optimal error constant and optimal quadrature .

To measure the quality of a  quadrature formula $Q_n \in \Oq(n,\R_2)$ Petras in \cite{P98} proposed the following definition.
\begin{definition}
\label{def:petras-loss}
  \begin{equation}
    \mbox{\rm loss}(Q,\A(D),d\alpha)=\frac{\rho(Q,\A(D),\alpha)}{\rho_n(\A(D),\alpha)}.
  \end{equation}
  The sequence $\{Q_n\}_{n \in \mathbb{N}}$, where $Q_n\in \Oq(n,\R_2)$, is called \emph{near-optimal}, if the sequence of corresponding losses is bounded.
\end{definition}


\subsection{Our motivation and main result}

In the works of Bakhvalov \cite{B} and Petras \cite{P98} there are  convincing arguments for  the near-optimality of the Gaussian quadrature in the case
when the domain of analyticity of the integrand is an ellipse; for other regions, it will be the Gaussian quadrature transported from the unique
ellipse via the Riemann mapping theorem. In Petras' article \cite{P98} one can find a demonstration of how the Gaussian quadrature fails to be nearly optimal,
 when the analyticity region is not an ellipse.

To describe briefly the results of Bakhvalov and Petras we  assume that $\alpha$ is the Lebesgue measure and $\R=\R_2$  (results in \cite{B} and \cite{P98} have been established for more general situations  discussed in more details in Section~\ref{sec:lower-upper-bnds}). Let $G_n$ denotes the Gauss-Legendre quadrature with $n$ nodes.

 The claim of an almost optimal performance of the Gauss-Legendre quadrature formula for ellipses is based on the following estimates
\begin{itemize}
\item there exists a bounded  and positive function $\kappa_l:(1,\infty) \to \mathbb{R}_+ $, such for any $c>1$ and for any quadrature $Q_n\in \Oq(n,\R_2)$  there is an  $f_0 \in  A_0(\mathcal{E}_c,M)$ such that
\begin{equation*}
|I(f_0) - Q_n(f_0)| \geq M \kappa_{l}(c) c^{-2n},  \label{eq:lo-bnd-err-intro}
\end{equation*}

\item there exists a bounded  and positive function $\kappa_g:(1,\infty) \to \mathbb{R}_+ $ such that, for any $c>1$ for the Gauss-Legendre quadrature $G_n$ for any  $f  \in  A_0(\mathcal{E}_c,M)$ holds
\begin{equation*}
|I(f) - G_n(f)| \leq M \kappa_{g}(c) c^{-2n}. \label{eq:up-bnd-err-intro}
\end{equation*}
\end{itemize}
Observe that the above estimates lead to  asymptotically the same bounds for $n$ needed to get the
quadrature error less than $\eps$.  We obtain
\begin{equation}
  N_l\left(\frac{M}{\eps},c \right)\leq n \leq N_g\left(\frac{M}{\eps},c \right)
\end{equation}
where
\begin{eqnarray}
   N_l\left(\frac{M}{\eps},c \right)=\frac{\ln \frac{M}{\eps} + \ln \kappa_l(c)}{2 \ln c}, \qquad N_g\left(\frac{M}{\eps},c \right)=\frac{\ln \frac{M}{\eps} + \ln \kappa_g(c)}{2 \ln c}. \label{eq:estm-Nl-Ng}
\end{eqnarray}
For $\eps \to 0^+$ we have $N_l \approx N_g \approx \left(\ln \frac{M}{\eps}\right)/(2 \ln c)$, so both lower and upper bounds predict more
or less the same number of nodes.

The motivation for our work comes from the following observation. From the estimates for $\kappa_l(c)$ given in \cite{B,P98} it follows that
\begin{equation}
  \lim_{c \to 1^+} \kappa_l(c)=0.  \label{eq:kappa-to-0}
\end{equation}
Thus if $c-1$ is small, $N_l<0$ in (\ref{eq:estm-Nl-Ng}) unless $\eps$ is very small, so in fact the lower bound given by (\ref{eq:estm-Nl-Ng}) does not have
any predictive power w.r.t. the number of nodes required to get the error less than $\eps$ for a substantial range of the parameters $c$ and $\eps$.


 The main technical result of our paper is  a new lower bound for  errors of arbitrary quadratures of bounded analytic function using $N$ values of functions or its  derivatives at some nodes,  which
does not suffer from the bad qualitative behavior exemplified by equation (\ref{eq:kappa-to-0}). This allows to obtain more meaningful lower
bounds on the cost of quadratures in the sprit of IBC approach to the complexity of integration
of bounded analytic functions (see \cite{KWW} and references given there).

Our approach is based on the conformal distance  on the domain of analyticity $D$.
The theorem below is an example of our  lower bound for the case of the Lebesgue measure.

\begin{theorem} \label{thm:main}
Let $D \subset \mathbb{C}$, $D \neq \mathbb{C}$ and let $D$ be a nice domain
such that $[-1,1] \subset D$.
For any $Q\in {\mathcal Q}(n, \R)$, where $|\R| = N$, and for any  $M>0$ there
exists a function $f_0 \in \A_0(D,M)$ such that
\begin{equation}
  |I(f_0) - Q(f_0)| \geq \gamma M,\label{eq:from-below}
\end{equation}
where
\begin{equation}\label{eq:gamma}
   \gamma = \left((1+1/(2\delta_D))^{2\delta_D}(2\delta_D+1)\right)^{-2N}
\end{equation}
and
\begin{equation*}
\delta_D:=\sup\{\delta_D(x):x\in[-1,1]\}, \quad \delta_D(x):=\inf\{|x-z|:z\in\mathbb{C}\setminus D\}.
\end{equation*}
\end{theorem}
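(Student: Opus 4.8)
The plan is to exploit that a quadrature $Q\in\Q(n,\R)$ only sees the data $\big(f^{(k)}(z_j)\big)_{0\le k\le r_j-1,\ 1\le j\le n}$, so it cannot separate two integrands with identical data, even when $Q$ is nonlinear. First I would look for a single auxiliary function $g\in\A_0(D,M)$ vanishing to order at least $r_j$ at each node $z_j$, i.e.\ $g^{(k)}(z_j)=0$ for $0\le k\le r_j-1$. For such a $g$ the functions $g$, $-g$ and $0$ all produce the zero data vector, whence $Q(g)=Q(-g)=Q(0)$; consequently
\[
|I(g)-Q(g)|+|I(-g)-Q(-g)|\ge |I(g)-I(-g)|=2|I(g)|,
\]
and one of $f_0:=g$ or $f_0:=-g$ satisfies $|I(f_0)-Q(f_0)|\ge|I(g)|$. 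This reduces the theorem to constructing $g\in\A_0(D,M)$ with the prescribed vanishing and $|I(g)|\ge\gamma M$.

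To build $g$ I would use the Riemann map. Since $D$ is a nice domain with $D\neq\CC$, there is a conformal map $\varphi\colon D\to\DD$, and by the symmetry of $D$ it can be normalized so that $\varphi(\bar z)=\overline{\varphi(z)}$; then $\varphi$ carries $[-1,1]$ into $(-1,1)$ and the nodes to real points $a_j:=\varphi(z_j)\in(-1,1)$. Writing $b_a(w)=\frac{w-a}{1-aw}$ for the real Blaschke factor, set $P=\prod_{j=1}^n b_{a_j}^{\,r_j}$ and
\[
g=M\,(P\circ\varphi)^2 .
\]
Then $g$ is analytic on $D$, real on $D\cap\RR$, bounded by $M$ (because $|P|\le 1$ on $\DD$), and vanishes to order $2r_j\ge r_j$ at each $z_j$, so $g\in\A_0(D,M)$ and kills the data. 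Squaring is what removes the sign changes of the Blaschke product at the nodes and makes $g\ge 0$ on $[-1,1]$, so that $I(g)=\int_{-1}^1 g\,dx$ cannot suffer cancellation; it is also the source of the exponent $2N$ in $\gamma$.

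The core is then a lower bound for $\int_{-1}^1(P\circ\varphi)^2\,dx$. By conformal invariance $|b_{a_j}(\varphi(x))|=\tanh d_D(x,z_j)$, where $d_D$ is the conformal (Poincar\'e) distance of $D$, so $g(x)/M=\prod_j\tanh^{2r_j}d_D(x,z_j)$. Applying Jensen's inequality to the positive integrand on $[-1,1]$ gives
\[
\int_{-1}^1 \frac{g}{M}\,dx\ \ge\ 2\exp\Big(\sum_{j}r_j\!\int_{-1}^1\log\tanh d_D(x,z_j)\,dx\Big).
\]
Thus it suffices to prove the per-node estimate $\int_{-1}^1\log\tanh d_D(x,z_j)\,dx\ge -2\log\big((1+\tfrac1{2\delta_D})^{2\delta_D}(2\delta_D+1)\big)$. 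I would obtain it from a lower bound for the conformal distance of the shape $\tanh d_D(x,z_j)\ge \frac{|x-z_j|}{|x-z_j|+2\delta_D}$, coming from the universal comparison of $d_D$ with the distance to $\partial D$ and the fact that $\delta_D(t)\le\delta_D$ on $[-1,1]$. Inserting this majorant, the elementary computation $\int\log\frac{s}{s+2\delta_D}\,ds=s\log s-(s+2\delta_D)\log(s+2\delta_D)+\mathrm{const}$, evaluated over the symmetric interval (worst case: node at the centre, so $|x-z_j|$ ranges over $[0,1]$ on each side), produces exactly $2\delta_D\log(2\delta_D)-(2\delta_D+1)\log(2\delta_D+1)=-\log\big((1+\tfrac1{2\delta_D})^{2\delta_D}(2\delta_D+1)\big)$, doubled by the two sides. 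Multiplying the per-node bounds and using $\sum_j r_j=N$ yields $|I(g)|\ge\gamma M$ with $\gamma$ as stated.

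The main obstacle is precisely this per-node conformal estimate: proving a clean lower bound $\tanh d_D(x,z_j)\ge\frac{|x-z_j|}{|x-z_j|+2\delta_D}$ with the correct constant $2\delta_D$. Upper bounds for $d_D$ (equivalently for $|b_{a_j}\circ\varphi|$) follow painlessly from the inscribed disc $B(z_j,\delta_D(z_j))\subset D$ via Schwarz--Pick, but that is the wrong direction; the lower bound requires controlling $d_D$ from below, i.e.\ bounding the hyperbolic density of $D$ from above, and this is where the hypothesis that every point of $[-1,1]$ lies within $\delta_D$ of $\partial D$ must be used through the right comparison domain. Getting the constant to be exactly $2\delta_D$ (rather than some multiple), so that the elementary integral reproduces $(1+1/(2\delta_D))^{2\delta_D}(2\delta_D+1)$, is the delicate point; the rest of the argument is bookkeeping.
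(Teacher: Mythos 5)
Your route is essentially the paper's: reduce to a single competitor with zero quadrature data (the paper takes the extremal function of Lemma~\ref{lem:unique-extremal}, which is your $M(P\circ\varphi)^2$ up to replacing the exponent $2r_j$ by the least even integer $r(r_j)\geq r_j$ --- immaterial for the final constant, since both are bounded by $2N$ in total), then bound $\int_{-1}^1\prod_j m(f_D(x),f_D(x_j))^{r(k_j)}\,dx$ from below via Jensen's inequality and a lower bound for the hyperbolic distance along $[-1,1]$. Your bookkeeping is correct: the antiderivative $s\ln s-(s+2\delta_D)\ln(s+2\delta_D)$ evaluated over $[0,1]$ does reproduce $-\ln\bigl((1+1/(2\delta_D))^{2\delta_D}(2\delta_D+1)\bigr)$, and summing $2r_j$ to $2N$ gives $\gamma$. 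Your $\pm g$ device is in fact more careful than the paper's proof, which passes from $Q(f)=Q(0)$ directly to $Q(f)=0$ even though $\Q(n,\R)$ is allowed to contain nonlinear quadratures. One small omission: you assert that the centred node is the worst case; the paper verifies this by showing that $h(t)=g(t)+g(-t)$ satisfies $h'(t)>0$ on $(0,1)$, so the per-node integral is minimized at $t=0$.

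However, the step you yourself flag as ``the main obstacle'' is a genuine gap, and it is precisely the content of Lemma~\ref{lemma:distance} and the unlabelled lemma following it. The paper closes it with the Koebe one-quarter theorem: if $g:\DD\to D$ is conformal with $g(0)=z_0$, then $\delta_D(z_0)\geq|g'(0)|/4$, hence $\gamma_D(z_0;1)=1/|g'(0)|\geq 1/(4\delta_D(z_0))$; integrating the hyperbolic metric along the geodesic $\RR\cap D$ gives $c_D(w,z)\geq L|w-z|/\delta_D$, and then $\tanh(Ls/\delta_D)\geq Ls/(\delta_D+Ls)$ follows from $(e^t-1)/(e^t+1)\geq t/(2+t)$. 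Note, though, that this yields your constant $2\delta_D$ (i.e.\ $L=1/2$) only when $D$ is convex, where the paper replaces Koebe by a comparison with a half-plane; for a general nice domain Koebe gives $L=1/4$, hence only $\tanh c_D(x,x_j)\geq|x-x_j|/(|x-x_j|+4\delta_D)$ and the weaker constant $\bigl((1+1/(4\delta_D))^{4\delta_D}(4\delta_D+1)\bigr)^{-2N}$. So your concern about obtaining exactly $2\delta_D$ is well founded: the paper's Theorem~\ref{thm:lower-estimate} establishes the $\gamma$ of (\ref{eq:gamma}) only in the convex case, and that is the estimate you would still need to prove (or weaken to the $L=1/4$ version) to complete the argument for a general nice domain.
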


This theorem is proved in Section~\ref{sec:new-lower-bound}.
Corollary \ref{cor:ellipse} therein contains the version of this result for the ellipse~$\mathcal{E}_c$.

 This result improves  the results of Bakhvalov \cite{B} and Petras \cite{P98} as it allows higher derivatives in the quadrature formula and more general measures $\alpha$.  On the other hand, in these works the nodes used in the quadrature are not restricted to the segment $[-1,1]$. However, the most
important qualitative improvement is that our bound does not tend to $0$ for $c\to 1^+$.

To the best of our knowledge, the only   similar result, i.e.\ the fact that the lower bound does not go to $0$ when the ellipse shrinks
to $[-1,1]$, has been established by Osipenko~\cite{O95} for a very particular weight function, namely the Chebyshev weight function.


Let us describe briefly the content of the paper. In Section~\ref{sec:lower-upper-bnds} we discuss in detail the results of Bakhvalov and Petras concerning
the lower bounds for the integration error for arbitrary quadrature and the upper bounds for the error of the Gauss-Legendre quadrature, and we compare them.
In Section~\ref{sec:new-lower-bound} we develop a new lower bound for the error of an arbitrary quadrature.

\section{Existing error bounds for quadratures of analytic functions}
\label{sec:lower-upper-bnds}

\subsection{Bakhvalov's lower bound for quadratures of analytic functions}
\label{subsec:bach}

The following theorem has been proven in \cite[Thm. 1]{B} (as an
improvement of a previous result from \cite{S}).
\begin{theorem}
\label{thm:bach1}

Assume that $d \alpha=p(x) dx$ and there exists a polynomial $t(x)$ such that $p(x)/t(x) \geq \eta > 0$ for $x \in [-1,1]$.

Let $z_1,z_2,\dots,z_n \in \mathbb{R}$ ($n \leq N$) and let $z_{n+1},\dots,z_N\in \CC$ be points contained
in upper half-plane ($\im z >0$). Let $\mathcal{E}_c$ be an ellipse which encloses all of these points.

For any quadrature formula of the form
\begin{eqnarray}
  Q_N (f)= \sum_{j=1}^n \left(b_{1j} \re f(z_j) + b_{2j} \re f'(z_j) + b_{3j} \im f(z_j) + b_{4j} \im f'(z_j)\right) +  \nonumber \\
      \sum_{j=n+1}^N \left(b_{1j} \re f(z_j) + b_{2j} \re f'(z_j) + b_{3j} \im f(z_j) + b_{4j} \im f'(z_j)\right),
\end{eqnarray}
any $c > 1$ and $M>0$, there exists a function $f_0 \in \A_0(\mathcal{E}_c,M)$ such that
\begin{equation*}
  I(f_0) - Q_N(f_0) \geq \kappa_0 M c^{-2N},
\end{equation*}
where $\kappa_0$ depends on $c$ and the weight function $p(x)$ only.
\end{theorem}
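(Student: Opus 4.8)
The plan is to exhibit one test function $f_0\in\A_0(\mathcal{E}_c,M)$ that is invisible to the quadrature and whose integral is already $\ge\kappa_0 M c^{-2N}$. I treat first the core situation $n=N$, in which all nodes lie in $[-1,1]$. Put $\omega(z)=\prod_{j=1}^N(z-z_j)$ and $f_0=\lambda\,\omega^2$, with $\lambda>0$ fixed by $\sup_{\mathcal{E}_c}|f_0|=\lambda\sup_{\mathcal{E}_c}|\omega|^2=M$, so that $f_0\in\A_0(\mathcal{E}_c,M)$. Since $\omega^2$ has a double zero at every node, $f_0(z_j)=f_0'(z_j)=0$ for all $j$; hence each of $\re f_0(z_j),\im f_0(z_j),\re f_0'(z_j),\im f_0'(z_j)$ vanishes and $Q_N(f_0)=0$. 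Therefore $I(f_0)-Q_N(f_0)=I(f_0)$, and because $\omega^2\ge 0$ on $[-1,1]$ this integral is positive, so it suffices to bound $\int_{-1}^1\lambda\,\omega(x)^2\,p(x)\,dx$ from below.

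The squaring is precisely what removes cancellation in the integral. Using the hypothesis to replace the measure by a nonnegative polynomial weight, $\int_{-1}^1\lambda\omega^2 p\,dx\ge\eta\int_{-1}^1\lambda\omega^2 t\,dx=\eta M\,\dfrac{\int_{-1}^1\omega(x)^2 t(x)\,dx}{\sup_{\mathcal{E}_c}|\omega|^2}$, I reduce everything to a polynomial-weighted $L^2$ estimate for the monic polynomial $\omega$. This last ratio is a classical extremal quantity: the Bernstein--Walsh bound on the ellipse gives $\sup_{\mathcal{E}_c}|\omega|\le\tfrac12(c^N+c^{-N})\sup_{[-1,1]}|\omega|$, and a Chebyshev-type extremal estimate bounds the ratio below by a constant multiple of $c^{-2N}$. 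Collecting the constants yields $I(f_0)\ge\kappa_0 M c^{-2N}$ with $\kappa_0$ depending only on $c$ and $p$, which is the assertion when $n=N$.

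The step I expect to be the genuine obstacle is the presence of the complex nodes $z_{n+1},\dots,z_N$. Keeping $f_0$ real on $[-1,1]$ forces every zero at a complex $z_j$ to be matched by a zero at $\bar z_j$, so the natural invisible function becomes $\lambda\big(\prod_{j\le n}(z-z_j)\prod_{j>n}(z-z_j)(z-\bar z_j)\big)^2$, of degree $2(2N-n)$; rerunning the estimate then only yields the weaker rate $c^{-(4N-2n)}$. Thus the kernel construction alone does not reach $c^{-2N}$ once $n<N$, and the gap must be closed by using the quadrature itself and not merely its kernel. The mechanism I would try to make rigorous is a dichotomy on the size of the coefficients: for $f\in\A_0(\mathcal{E}_c,M)$ the data $\im f(z_j),\im f'(z_j)$ at a nearly real node are of order $\im z_j$, so a quadrature that profits from them must carry large coefficients --- in which case some bounded $f_0$ with small $I(f_0)$ forces $|Q_N(f_0)|$, and hence the error, to be $\ge\kappa_0 M c^{-2N}$ --- whereas if the coefficients are moderate the invisible-function bound already applies. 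Turning this balance into a quantitative estimate, uniform over the node configuration and with a constant depending only on $c$ and $p$, is the real content; by comparison the construction of $f_0$ and the reduction of the weight through $t$ are routine.
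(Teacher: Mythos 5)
The paper does not actually prove Theorem~\ref{thm:bach1}; it is quoted from Bakhvalov \cite{B}, and the surrounding text only records the constant $\kappa_0$ and remarks that the extremal object is a polynomial of degree $2n+m$ vanishing at the nodes. Your skeleton (a nonnegative ``invisible'' polynomial with double zeros at the nodes, normalized by its sup on $\mathcal{E}_c$, with $d\alpha$ replaced by the polynomial weight $t$ via $p\ge\eta t$) is indeed the right skeleton for the all-real-node case, so what matters is whether the two steps you leave unproved can be filled in as you describe. The first cannot, at least not by the route you indicate. Everything rests on the bound $\int_{-1}^1\omega^2t\,dx\ge C(c,t)\,c^{-2N}\sup_{\mathcal{E}_c}|\omega|^2$, uniformly over all node configurations, and you propose to obtain it by inserting $\sup_{[-1,1]}|\omega|$ via Bernstein--Walsh. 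But the remaining factor $\int_{-1}^1\omega^2t\,dx\,/\sup_{[-1,1]}\omega^2$ is an inverse Nikolskii quantity and is \emph{not} bounded below uniformly in the configuration: with all $N$ nodes at $0$ one has $\omega(x)=x^N$ and this factor is of order $1/N$ (in general Markov--Nikolskii only gives $N^{-2}$), so your chain yields at best $\kappa_0 c^{-2N}/N$, whereas $\kappa_0$ must depend only on $c$ and $p$. (The loss is invisible in that example only because $\sup_{\mathcal{E}_c}|x|^{2N}=((c+c^{-1})/2)^{2N}\ll c^{2N}$, i.e.\ Bernstein--Walsh is far from sharp exactly where Nikolskii is bad; your argument uses both in their worst case simultaneously.) The uniform estimate is true, but it has to be proved for the ratio as a whole --- e.g.\ by expanding $\omega$ in the orthonormal polynomials of $t\,dx$ and applying Cauchy--Schwarz as in Theorem~\ref{thm:PetrasOrtPol}, together with the Szeg\"o-type bound $\sup_{\mathcal{E}_c}|p_\nu|\le C(t)c^\nu$ --- and that is precisely the nontrivial content of the theorem, not a citable routine fact. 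Note also that the real nodes are only required to lie inside $\mathcal{E}_c$, not in $[-1,1]$, so ``Chebyshev-type'' extremal statements on $[-1,1]$ need extra care, while the orthogonal-polynomial route is insensitive to where the zeros of $\omega$ sit.

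The second gap you identify yourself: for $n<N$ the conjugate-symmetrized kernel has degree $4N-2n$ and delivers only $c^{-(4N-2n)}$, and the ``dichotomy on the size of the coefficients'' meant to recover the rate $c^{-2N}$ is a program, not an argument --- no quantitative version is given, and it is exactly the part of Bakhvalov's proof that cannot be reconstructed from the kernel idea alone. Since the theorem as stated allows complex nodes, the proposal as written proves a strictly weaker statement even granting the extremal estimate. In summary: right construction of $f_0$ and correct reduction through $t$, but the central extremal inequality is both unproved and mis-routed (it would not give a constant independent of $N$), and the complex-node case is missing.
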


\textbf{Comment:}
\begin{itemize}
\item In terms of the notions introduced earlier, for $N = n$  we have
\begin{equation}
  \rho_n(\A(D),d\alpha) \geq \kappa_0 c^{-2n}.
\end{equation}

\item  In \cite{B} the following formula for
$\kappa_0$ is given (see page 67)
\begin{equation}
\kappa_0=\pi P_0 (1-c^{-1})c^{-2m}(\sinh h)^m,
\label{eq:kappa0-corrected}
\end{equation}
where $h= \ln c$ (hence $\sinh h = (c - c^{-1})/2$) and
constants $P_0\in \mathbb{R}_+$, $m \in \mathbb{N}$   depend on
the weight function only ($P_0$ appears as $Q_0$ in \cite{B}).
 In fact, \cite{B} misprints the formula for $\kappa_0$ as $(1-c^{-1})^{-1}$ instead of
$(1-c^{-1})$.

The constants $m$ and $P_0$ are determined as follows: after the substitution $x=\cos u$ we
have
\begin{equation}
  I(f)=\int_0^\pi f(\cos u) q(u) du, \qquad q(u)=p(\cos u) \sin u.
\end{equation}
Under the assumptions of the theorem the following holds true
\begin{equation}
  q(u)=P(u) l(\cos u),
\end{equation}
where $l$ is a polynomial of degree $m$ and $P(u)=q(u)/(l(\cos u)) \geq P_0 >0$ for $u \in [0,\pi]$ ($P$~appears as $Q$ in \cite{B}).
Therefore, $m$ is the number of zeros in $q(u)$ counted with multiplicities. It is related to the number of zeros in the
weight function $p(x)$: it is the number of zeros  $p(x)$ counted with multiplicities plus two if the  zeros at $0$ and $\pi$
introduced in $q(u)$ by the factor $\sin u$ are not canceled by the singular behavior of $p(x)$, when $x \to \pm 1$. Such cancelations  happen for the
Chebyshev weight (see below).

\begin{itemize}
\item
 For $p(x)\equiv 1$ we have $q(u)=\sin u$. Therefore  $l(z)=1-z^2$, $$\frac{q(u)}{l(\cos u)}=\frac{\sin u}{1-\cos^2 u }= \frac{1}{\sin u} \geq P_0=1$$ for
  $x \in [0,\pi]$. Hence $m=2$.

Easy computations show that for $m=2$ and $P_0=1$ we obtain
\begin{eqnarray}
 \kappa_0 & = & \frac{\pi}{4} (1-c^{-1}) c^{-4} (c-c^{-1})^2
 =   \frac{\pi}{4}(c-1)^3  \frac{(c+1)^2}{c^7}=  \label{eq:ko-bach}\\
& = & \pi (c-1)^3 + O((c-1)^4), \quad  \mbox{ for $c\to 1^+$}. \nonumber
\end{eqnarray}

\item For the Chebyshev weight
$p(x)=1/\sqrt{1-x^2}$
we have
\begin{equation*}
  q(u) = p(\cos u) \sin u = 1.
\end{equation*}
Hence $m=0$ and $P_0=1$, and consequently
\begin{eqnarray}
 \kappa_0=\pi (1-c^{-1}) =\frac{\pi (c-1)}{c}.
\end{eqnarray}
\end{itemize}

We obtain a counter-intuitive statement that when $c-1$ is small (i.e.\  the integrated function is difficult to calculate due to the possible presence of
singularities nearby),
the lower bound for the error is also small. Hence the
quality of the bound is rather poor and can be considerably
improved.

\item $\rho_n(\A(D),d\alpha)$ is  estimated  as follows. For  for any $n$ nodes of   polynomial $f_0 \in \A(\mathcal{E}_c,1)$ of degree $2n+m$ is defined, such that its quadrature error is bounded from below by $\kappa_0 c^{-2n}$. For fixed set of nodes this polynomial is the same for all $c>1$ up to a  multiplicative constant depending on $c$.  Therefore, the functions considered have  no singularities outside the ellipse.

\end{itemize}

\subsection{Petras' lower bounds}
\label{subsec:petras}

 Petras in \cite{P98} considers the quadrature of the same type as in Theorem~\ref{thm:bach1}, ellipses as analyticity regions
and   the Szeg\"o class of weights (measures), which are defined as follows:
$d \alpha(x)=w(x)dx$, where $w$ is a function for which $\int_0^\pi \ln w(\cos x)dx$ exists. It contains the class of weights considered by Bakhvalov.

The reasoning in \cite{P98} goes as follows. Petras proves the following theorem for even more general class of weight measures.
\begin{theorem} \cite[Thm. 2.1]{P98}
\label{thm:PetrasOrtPol}
Assume that the measure $\alpha$ is supported on at least $n+1$ points. Let $D$ be a symmetric domain. Let $p_0,p_1,\ldots,p_n$ be the orthonormal polynomials with respect to the measure~$\alpha$. Then
\begin{equation}
  \rho_n(\A(D),d \alpha) \geq k_n(\A(D),d \alpha):= \left(\sum_{\nu=0}^n (\sup_{z \in D}|p_\nu(z)|)^2\right)^{-1}.
\end{equation}
\end{theorem}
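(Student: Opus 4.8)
The plan is to bound $\rho_n(\A(D),d\alpha)$ from below by exhibiting, for \emph{every} admissible quadrature $Q\in\Q(n,\R_2)$, a single test function whose normalized error is at least $k_n(\A(D),d\alpha)$. Fix such a $Q$ and let $z_1,\dots,z_n\in[-1,1]$ be its nodes; since $\R=\R_2$, the quadrature sees only the data $(f(z_1),f'(z_1),\dots,f(z_n),f'(z_n))$. The candidate test function is $f_0:=\omega^2$, where $\omega(z):=\prod_{j=1}^n(z-z_j)$ is the real polynomial of degree $n$ vanishing at the nodes. The point of squaring is threefold: $f_0$ vanishes to second order at every node, so it is invisible to $Q$ together with $-f_0$; its integral is manifestly nonnegative and equals a squared $L^2(\alpha)$-norm; and its sup-norm is the square of the sup-norm of a polynomial of degree only $n$, which is exactly what the Christoffel-type estimate below can control. (If $D$ is unbounded then $\sup_{z\in D}|p_\nu(z)|=\infty$ for some $\nu$, so $k_n=0$ and the inequality is trivial; hence I may assume $D$ bounded, so that $f_0\in\A(D)$.)

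First I would dispose of the nonlinearity of $Q$ by a symmetrization. Because $f_0(z_j)=0$ and $f_0'(z_j)=2\omega(z_j)\omega'(z_j)=0$ for every $j$, both $f_0$ and $-f_0$ produce the same (zero) data at all nodes, whence $Q(f_0)=Q(-f_0)$. Writing $R(f)=I(f)-Q(f)$ and using linearity of $I$ gives $R(f_0)-R(-f_0)=2I(f_0)$, so $|R(f_0)|+|R(-f_0)|\ge 2|I(f_0)|$. Since $\|f_0\|=\|-f_0\|$, one of the two quotients $|R(f_0)|/\|f_0\|$, $|R(-f_0)|/\|f_0\|$ is at least $|I(f_0)|/\|f_0\|$, and therefore $\rho(Q,\A(D),\alpha)\ge |I(f_0)|/\|f_0\|$. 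This reduces the theorem to the single scale-invariant estimate $|I(f_0)|/\|f_0\|\ge k_n$.

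The heart of the argument is this estimate. Here $I(f_0)=\int_{-1}^1\omega^2\,d\alpha=\|\omega\|_{L^2(\alpha)}^2$ and $\|f_0\|=\sup_{z\in D}|\omega(z)|^2=:\|\omega\|_D^2$, so I must show $\|\omega\|_D^2\le k_n^{-1}\|\omega\|_{L^2(\alpha)}^2$. The hypothesis that $\alpha$ is supported on at least $n+1$ points guarantees that the orthonormal polynomials $p_0,\dots,p_n$ exist and that $\|\omega\|_{L^2(\alpha)}^2>0$ (as $\omega$, having at most $n$ zeros, cannot vanish on the whole support). Expanding $\omega=\sum_{\nu=0}^n c_\nu p_\nu$ gives $\|\omega\|_{L^2(\alpha)}^2=\sum_{\nu=0}^n|c_\nu|^2$, and Cauchy--Schwarz yields, for every $z$,
\begin{equation*}
|\omega(z)|^2=\Big|\sum_{\nu=0}^n c_\nu p_\nu(z)\Big|^2\le \Big(\sum_{\nu=0}^n|c_\nu|^2\Big)\sum_{\nu=0}^n|p_\nu(z)|^2=\|\omega\|_{L^2(\alpha)}^2\sum_{\nu=0}^n|p_\nu(z)|^2.
\end{equation*}
Taking the supremum over $z\in D$ and bounding the supremum of the sum by the sum of the suprema gives $\|\omega\|_D^2\le \|\omega\|_{L^2(\alpha)}^2\sum_{\nu=0}^n(\sup_{z\in D}|p_\nu(z)|)^2=k_n^{-1}\|\omega\|_{L^2(\alpha)}^2$, which is exactly what was needed. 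Combining the three steps gives $\rho(Q,\A(D),\alpha)\ge k_n$ for arbitrary $Q$, and taking the infimum over $Q\in\Q(n,\R_2)$ yields $\rho_n(\A(D),d\alpha)\ge k_n(\A(D),d\alpha)$.

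The routine ingredients are the symmetrization and the expansion in the orthonormal basis; the one genuinely load-bearing step is the passage from the $L^2(\alpha)$-norm to the sup-norm over $D$, i.e.\ the Christoffel-function inequality together with the crossing-over bound $\sup_{z\in D}\sum_\nu|p_\nu(z)|^2\le\sum_\nu\sup_{z\in D}|p_\nu(z)|^2$. The main thing to get right is the degree bookkeeping: squaring $\omega$ is what simultaneously creates the second-order vanishing needed to kill the derivative data of $Q$ while keeping $\omega$ itself of degree $\le n$, so that Cauchy--Schwarz against the finite system $p_0,\dots,p_n$ applies. I would expect no essential difficulty beyond verifying that $\omega^2$ is admissible (bounded on $D$, real on $\RR$) and that the support and positivity hypotheses make every quantity finite and the ratio nontrivial.
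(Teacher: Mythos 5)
Your proof is correct. The paper does not prove this statement at all---it is imported verbatim from Petras \cite{P98} (Thm.~2.1)---and your argument (test function $f_0=\omega^2$ with $\omega$ vanishing at the nodes, the symmetrization $Q(f_0)=Q(-f_0)$ to handle possibly nonlinear quadratures, and the Christoffel-type bound $\sup_{z\in D}|\omega(z)|^2\le\|\omega\|_{L^2(\alpha)}^2\sum_{\nu}(\sup_{z\in D}|p_\nu(z)|)^2$) is exactly the standard proof of that cited result, with the edge cases (unbounded $D$ giving $k_n=0$, and $\|\omega\|_{L^2(\alpha)}>0$ from the support hypothesis) handled correctly.
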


For  weights in the Szeg\"o class and $D=\mathcal{E}_c$ Petras obtains (see Corollary 3.1 in \cite{P98}) the following result
\begin{equation}
  \lim_{n \to \infty} c^{2n} k_n(\A(\mathcal{E}_c), d\alpha)= 2\pi (1-c^{-2}) \cdot \min_{|z|=c} |D(z^{-1})|^2 >0,  \label{eq:Petras-Asymptotic}
\end{equation}
where the so-called Szeg\"o function $D(z)$ is given by
\begin{equation}
  D(z):= \exp \left( \frac{1}{4 \pi} \int_{-\pi}^\pi \frac{1 + ze^{-it}}{1- z e^{-it}} \ln (w(\cos t)|\sin t|) dt \right).
\end{equation}
Observe that apparently from the above formula one obtains $\rho_n \geq O(c-1)/c^{2n}$, the exact form of lower bound depending
on the term involving function $D(z)$ in (\ref{eq:Petras-Asymptotic}) and for $c \geq R > 1$ one obtains
\begin{equation}
\rho_n(\A(\mathcal{E}_c),d\alpha) \geq \kappa_0 c^{-2n},
\end{equation}
generalizing Theorem~\ref{thm:bach1}.

For several particular weights Petras computes an explicit lower bound for $ k_n(\A(\mathcal{E}_c), d\alpha)$,  but it exhibits an incorrect behavior for $c\to 1^+$.

 Below we list only the results for
the Lebesgue measure and the Chebyshev weight.
\begin{itemize}
\item  From Corollary 3.6 in \cite{P98} it follows that for the weight $w(x)\equiv 1$ it holds
\begin{equation}
  \rho_n(\A(\mathcal{E}_c),dx)\geq \pi(1-c^{-2})^2 c^{-2n} \cdot (1 + \eps_n)^{-1},
\end{equation}
where
\begin{equation*}
  0 \leq  \eps_n \leq \frac{c^4 + 4c^2 + 18}{4n c^2 (c^2-1)} + \frac{(n+2)^{3/2}}{c^{n+2}}.
\end{equation*}
It is clear that for a fixed $n$, this bound is $O((c-1)^3)$ for $c \to 1^+$.  To be more precise we have (for fixed $n$)
\begin{equation}
  \rho_n(\A(\mathcal{E}_c),dx) \geq \frac{\pi}{c^{2n}} \left(\frac{32}{23} (c-1)^3 n c^2+O\left((c-1)^4\right)\right).
\end{equation}
\item From Corollary 3.5 in \cite{P98} it follows that for the Chebyshev weight $d\alpha(x)=$ $w(x)dx=$ $dx/\sqrt{1-x^2}$ we have
\begin{equation}
  \rho_n(\A(\mathcal{E}_c),d \alpha) \geq \frac{\pi (1-c^{-2})^3}{2 c^{2n}} \left( 1- \frac{(2n+3)(c^2 -1)+ c^{-2n - 2}}{c^{2n +4}} \right)^{-1} \geq  \frac{\pi (1-c^{-2})^3}{2 c^{2n}}.
\end{equation}
For $c\to 1^+$ we obtain the following estimate  
\begin{equation}
  \rho_n(\A(\mathcal{E}_c),d \alpha) \geq \frac{\pi}{c^{2n}} \left( \frac{3  }{2 n^3+9 n^2+13 n+6}+\frac{6   (c-1) n}{2 n^3+9 n^2+13
   n+6}+O\left((c-1)^2\right) \right).
\end{equation}
In this case for fixed $n$ the lower bound for $c\to 1^+$ does not go to zero, however it goes when $n \to \infty$, which turns out to be unsatisfactory.
\end{itemize}

In fact the bound in Theorem~\ref{thm:PetrasOrtPol} obtained by considering polynomials of degree  $2n$. Hence the functions producing this bound have  no singularities outside the ellipse.

\subsection{Osipenko estimates}
\label{subsec:Osip}
 Osipenko in \cite[Thm. 6]{O95} obtained the following explicit estimate for the Chebyshev weight $d\alpha(x)=dx/\sqrt{1-x^2}$
\begin{equation}
\rho_n(\mathcal{A}(\mathcal{E}_c),d\alpha)=\frac{2\pi}{c^{2n}} +O(c^{-6n}) \label{eq:Osip-estm}
\end{equation}
and  the limit behavior
\begin{equation}
\lim_{c \to 1^+} \rho_n(\mathcal{A}(\mathcal{E}_c),d\alpha)=2\pi.
\end{equation}
 Osipenko uses transformation of an ellipse to an infinite strip, which transforms the problem of integration of bounded analytic functions defined on the ellipse with the Chebyshev weight to the problem of  integration of analytic periodic functions with the Lebesgue measure. He uses  Blaschke products to find  lower estimate for the error, which is natural for this kind of problem. This should be contrasted with the polynomials used to derive lower bounds in \cite{B,P98}.

\subsection{Final comments on Bakhvalov's and Petras' lower bounds}

Both Bakhvalov and Petras  mention that  the Riemann mapping theorem allows to transport the results for an ellipse to other domains. However, no  quantitative statements related to the geometry of the domain $D$ are given.

As it was mentioned in the introduction we have found the behavior of $\kappa_{l}(c)$ for
$c \to 1^+$ obtained by Bakhvalov and by Petras overly pessimistic. In the argument below we will show how bad  this
 bound is qualitatively. Namely, if $\kappa_{g}(c)$ were of the same order as $\kappa_{l}(c)$, i.e.\ $\lim_{c \to 1^+}\kappa_g(c)=0$, the quadrature would be exact even for $n = 1$. This is formalized in the following remark.

\begin{remark}
\label{rem:wrong-lower-bnd}
Let $Q\in \Q(n,\R)$ and a positive bounded function $\kappa:(1,\infty) \times \NN \to \mathbb{R}_+$ be such that
 \begin{equation}
   |I(f) - Q(f)| \leq M \kappa(c,n) c^{-2n}, \quad f \in
   \A_0(\mathcal{E}_c,M). \label{eq:err-estm-hyp}
 \end{equation}
 Assume that  for each $n \in \NN$ holds
\begin{equation}
  \lim_{c \to 1^+}\kappa(c,n)=0.  \label{eq:kappa-lim}
\end{equation}
Then for any $M>0$, $c > 1$, $n \in \NN$ and $f \in
   \A_0(\mathcal{E}_c,M)$ holds
\begin{equation*}
  I(f)=Q(f).
\end{equation*}
\end{remark}

\begin{proof}
 Since $\mathcal{E}_c \subset \mathcal{E}_{c_1}$
for $c<c_1$, we have
\begin{equation}
  \A_0(\mathcal{E}_{c_1},M) \subset \A_0(\mathcal{E}_{c},M), \quad c <
  c_1.  \label{eq:ellipse-incl}
\end{equation}
The above inclusion holds in the following sense: for a function
$f \in \A_0(\mathcal{E}_{c_1},M)$ we consider its restriction to
$\mathcal{E}_c$. It is immediate to see that $f_{|\mathcal{E}_c}
\in \A_0(\mathcal{E}_{c},M)$.

Let us fix $n$ and take a function $f \in
\A_0(\mathcal{E}_{c_1},M)$. By (\ref{eq:err-estm-hyp}) and
(\ref{eq:ellipse-incl})
\begin{equation*}
  |I(f) - Q(f)| \leq M \kappa(c,n) {c}^{-2n}, \quad 1<c\leq c_1.
\end{equation*}
Passing to the limit $c \to 1$ we obtain
\begin{equation*}
  |I(f) - Q(f)| =0.
\end{equation*}
\qed
\end{proof}

\subsection{Upper bounds for Gauss-Legendre quadratures}
\label{secsub:Gauss-quadrature}

We assume that $d\alpha(x)=dx$ and $G_n(f)$ denotes the Gauss-Legendre quadrature with $n$ nodes on $[-1,1]$.

Let us define
\begin{equation}
  r_n(c)=\rho(G_n, \A_0(\mathcal{E}_c,1),dx)=\sup_{f \in \A_0(\mathcal{E}_c,1) }|I(f) - G_n(f)|.
\end{equation}
Obviously
\begin{equation}
  |I(f) - G_n(f)| \leq M r_n(c) , \quad f \in \A_0(\mathcal{E}_c,M).
\end{equation}

Let us list two estimates for the error of Gauss quadrature known in the literature.

Let us start with the estimates for the error of the Gauss
quadrature  due to Rabinowitz \cite[eq.~(18)]{R}, see also
\cite[Thm.~90]{Br} and \cite[Thm.~4.5]{T}
\begin{theorem}
\label{thm:rabinowitz}
\begin{equation}\label{eq:gauss-upper-bnd}
 r_n(c) \leq \min \left(4,\frac{64 }{15(1 - c^{-2})}c^{-2n}\right).
\end{equation}
\end{theorem}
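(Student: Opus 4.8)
The bound has two parts, and I would treat them separately. The constant $4$ is the trivial, scale-free estimate: if $f\in\A_0(\mathcal{E}_c,1)$ then $|f|\le 1$ on $[-1,1]$, so $|I(f)|\le\int_{-1}^1 1\,dx=2$, while the Gauss--Legendre weights are positive and sum to $\int_{-1}^1 dx=2$, whence $|G_n(f)|\le 2$ and $|I(f)-G_n(f)|\le 4$. Taking the supremum gives $r_n(c)\le 4$ for every $c>1$ and $n$. This is the term that keeps the bound finite as $c\to 1^+$, where the second term blows up through the factor $(1-c^{-2})^{-1}$, so the $\min$ is essential precisely in that regime.

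For the exponential term I would expand the integrand in a Chebyshev series $f=\sum_{k\ge0}a_kT_k$ on $[-1,1]$ and use linearity: $R_n(f)=I(f)-G_n(f)=\sum_{k\ge0}a_k\,R_n(T_k)$ with $R_n(T_k)=I(T_k)-G_n(T_k)$. Two exact facts collapse this sum. First, $G_n$ integrates polynomials of degree $\le 2n-1$ exactly, so $R_n(T_k)=0$ for $k\le 2n-1$. Second, the Gauss--Legendre nodes and weights are symmetric about $0$, hence $G_n(T_k)=0$ for odd $k$; since also $\int_{-1}^1T_k=0$ for odd $k$, we get $R_n(T_k)=0$ for all odd $k$. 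Thus only even $k\ge 2n$ survive.

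The per-term estimate is then elementary. For even $k$ one computes $I(T_k)=\int_{-1}^1T_k(x)\,dx=\tfrac{2}{1-k^2}$, so $|I(T_k)|=\tfrac{2}{k^2-1}$, while positivity of the weights again gives $|G_n(T_k)|\le\sum_jw_j|T_k(x_j)|\le 2$; hence $|R_n(T_k)|\le 2+\tfrac{2}{k^2-1}$. The coefficient sizes are controlled by the Bernstein estimate: since $f$ is analytic and bounded by $1$ on the Bernstein ellipse $\mathcal{E}_c$ (whose semi-axis sum is $c$), one has $|a_k|\le 2c^{-k}$. Combining these and summing the geometric series over even $k\ge 2n$ gives $|R_n(f)|\le\frac{1}{1-c^{-2}}\bigl(4+\frac{4}{4n^2-1}\bigr)c^{-2n}$; since $\frac{4}{4n^2-1}\le\frac{4}{15}$ for $n\ge 2$ and $4+\frac{4}{15}=\frac{64}{15}$, this yields the claimed $\frac{64}{15(1-c^{-2})}c^{-2n}$.

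The delicate points are, first, the Bernstein bound $|a_k|\le 2c^{-k}$: the clean proof deforms the contour in the Chebyshev-coefficient integral onto the image of $|w|=c$ under the Joukowski map $z=\frac12(w+w^{-1})$, and needs a limiting argument because $f$ is only bounded on the \emph{open} ellipse. Second, landing the constant exactly at $\frac{64}{15}$ requires the observation above together with capping by the trivial bound $4$ for $n=1$ and for $c$ near $1$, where $\frac{4}{4n^2-1}$ or $(1-c^{-2})^{-1}$ would otherwise spoil the clean value; this is exactly what the $\min$ provides. I expect the main obstacle to be the Bernstein estimate together with the honest bookkeeping of the first surviving term $k=2n$. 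A classical alternative (Rabinowitz) instead writes $R_n(f)=\frac{1}{2\pi i}\oint_{\mathcal{E}_\rho}K_n(z)f(z)\,dz$ with the Gauss kernel $K_n=\varrho_n/P_n$, $\varrho_n(z)=\int_{-1}^1\frac{P_n(t)}{z-t}\,dt$, and estimates $|K_n|$ on $\mathcal{E}_\rho$ via the asymptotics of the Legendre functions before letting $\rho\to c^-$; there the main obstacle is the sharp estimate of $|Q_n(z)/P_n(z)|$ on the ellipse.
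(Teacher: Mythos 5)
The paper does not prove this statement; it is quoted from the literature (Rabinowitz, Brass, Trefethen), so there is no internal proof to compare against. Your argument is essentially the Chebyshev-expansion proof of Trefethen's Theorem 4.5, one of the cited sources: expand $f=\sum_k a_kT_k$, kill $k\le 2n-1$ by polynomial exactness and odd $k$ by symmetry, bound $|a_k|\le 2c^{-k}$ by the Joukowski contour argument with the limit $\rho\to c^-$, and sum the geometric series over even $k\ge 2n$. The outline and the individual ingredients (the trivial bound $4$ from positivity of the weights, the value $I(T_k)=2/(1-k^2)$ for even $k$, the absolute convergence justifying term-by-term application of $R_n$) are all correct for $n\ge 2$.

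However, your treatment of $n=1$ has a genuine gap. For $n\ge 2$ the first surviving index is $k=2n\ge 4$, so $2+\frac{2}{k^2-1}\le\frac{32}{15}$ and the constant $\frac{64}{15}$ comes out. For $n=1$ your term-by-term bound at $k=2$ is $2+\frac{2}{3}=\frac{8}{3}>\frac{32}{15}$, which only yields $r_1(c)\le\frac{16}{3}\,\frac{c^{-2}}{1-c^{-2}}$, and you propose to repair this by the cap at $4$. The cap does not help: whenever $c^2>\frac{31}{15}$ the claimed bound equals $\frac{64}{15}\,\frac{c^{-2}}{1-c^{-2}}<4$, while your estimate $\min\left(4,\frac{16}{3}\,\frac{c^{-2}}{1-c^{-2}}\right)$ is strictly larger than that for every such $c$, since $\frac{16}{3}>\frac{64}{15}$. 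The standard repair is to evaluate the first term exactly rather than by the triangle inequality: $I(T_2)=-\frac{2}{3}$ and $G_1(T_2)=2T_2(0)=-2$, so $|R_1(T_2)|=\frac{4}{3}\le\frac{32}{15}$, and all remaining even $k\ge 4$ already satisfy $2+\frac{2}{k^2-1}\le\frac{32}{15}$; with this observation the constant $\frac{64}{15}$ holds for $n=1$ as well and the stated bound follows for all $n$.
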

The non-constant part of this estimate has an undesirable property. For $c \to 1$ it explodes,
 which may lead to non-uniform estimates in some contexts.

The bounds which are much more uniform in $c$ for $c \to 1$  are given by Petras in \cite{P95}.
\begin{theorem}  \cite[Thm. 4]{P95}
\label{thm:gauss-petras}
\begin{equation*}
 r_n(c) \leq \frac{4}{c^{2n}}  \left( 1 + \frac{3}{2n c^2}+ \frac{4}{c^{n+1}}\right).
\end{equation*}
\end{theorem}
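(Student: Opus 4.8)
The plan is to start from the classical contour-integral representation of the remainder of an interpolatory rule. Since $G_n$ is exact on polynomials of degree $\le 2n-1$ and every $f\in\A_0(\mathcal{E}_c,1)$ is analytic on the ellipse, applying Cauchy's formula $f(x)=\frac{1}{2\pi i}\oint_{\partial\mathcal{E}_\rho}\frac{f(z)}{z-x}\,dz$ (for $x\in[-1,1]$) and moving the bounded linear functional $R_n(f)=I(f)-G_n(f)$ under the integral gives, for $1<\rho<c$,
\[
R_n(f)=\frac{1}{2\pi i}\oint_{\partial\mathcal{E}_\rho}K_n(z)\,f(z)\,dz,\qquad
K_n(z)=R_n\!\Big(\tfrac{1}{z-\cdot}\Big)=\int_{-1}^{1}\frac{dx}{z-x}-\sum_{j}\frac{w_j}{z-x_j}=\frac{2\,\psi_n(z)}{P_n(z)},
\]
where $P_n$ is the Legendre polynomial (its zeros are the Gauss nodes) and $\psi_n(z)=\tfrac12\int_{-1}^{1}\frac{P_n(t)}{z-t}\,dt$ is the associated Legendre function of the second kind. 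Taking absolute values with $|f|\le1$ on $\mathcal{E}_\rho$ and letting $\rho\to c^-$ reduces everything to a sharp estimate of the kernel,
\[
r_n(c)\le\frac{1}{2\pi}\oint_{\partial\mathcal{E}_c}\frac{2\,|\psi_n(z)|}{|P_n(z)|}\,|dz|.
\]

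Next I would use the Joukowski parametrization $z=\tfrac12(u+u^{-1})$ with $u=c\,e^{i\theta}$, under which $\partial\mathcal{E}_c$ is the image of $|u|=c$ and $|dz|=\tfrac{c}{2}\,|1-u^{-2}|\,d\theta$. In this variable $P_n(\tfrac12(u+u^{-1}))$ is a palindromic Laurent polynomial dominated by its top term $\propto u^{n}$, while $\psi_n$ has a convergent expansion in powers of $u^{-2}$ with leading term $\propto u^{-(n+1)}$. Dividing, $K_n$ equals a constant times $u^{-(2n+1)}\bigl(1+O(|u|^{-2})\bigr)$, and integrating $|K_n|\,|dz|$ over $|u|=c$ produces the principal term $4\,c^{-2n}$; the constant $4$ comes out of the ratio of the leading coefficients of $\psi_n$ and $P_n$ together with the length factor $\tfrac{c}{2}|1-u^{-2}|$.

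The delicate part, and the one that separates this uniform estimate from the cruder bound of Theorem~\ref{thm:rabinowitz}, is to carry the lower-order terms explicitly instead of hiding them in an $O(\cdot)$. The relative correction $\tfrac{3}{2nc^2}$ is of order $c^{-2}$ and reflects the first subleading coefficients in the expansions of $\psi_n$ and of $1/P_n$, the factor $1/n$ being a genuine feature of those coefficients. The remaining term $\tfrac{4}{c^{n+1}}$ is the most delicate: it captures the contribution of the part of $P_n$ away from its top coefficient — equivalently the proximity of $\partial\mathcal{E}_c$ to $[-1,1]$ near the foci $\pm1$ — and must be controlled by a lower bound for $|P_n|$ valid on the \emph{whole} ellipse. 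The crucial point is that this lower bound cannot be of the naive shape $|P_n(z)|\ge(\text{lead})\,(c^{n}-c^{-n})$, since the ensuing factor $(1-c^{-2n})^{-1}$ explodes as $c\to1^+$ and reproduces exactly a Rabinowitz-type estimate; one needs bounds that stay finite down to $c=1$. Establishing such sharp two-sided control of the Legendre functions on the ellipse, with every numerical constant tracked, is the main obstacle.

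An equivalent and more transparent route expands $f=\sum_k a_k T_k$ in Chebyshev series. For $f\in\A_0(\mathcal{E}_c,1)$ one has $|a_0|\le1$ and $|a_k|\le 2c^{-k}$ for $k\ge1$, while the degree of exactness together with the symmetry of $G_n$ gives $R_n(T_k)=0$ for $0\le k\le 2n-1$ and for every odd $k$; hence
\[
r_n(c)\le 2\sum_{j\ge n}c^{-2j}\,\bigl|R_n(T_{2j})\bigr|.
\]
Here $|R_n(T_{2n})|$ close to $2$ yields the leading $4\,c^{-2n}$, and the series produces the $O(c^{-2})$ correction cleanly; matching the precise closed form of the theorem again reduces to sharp, non-asymptotic estimates of the individual Gauss errors $R_n(T_{2j})$. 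In either formulation the essential difficulty is identical: replacing order-of-magnitude asymptotics of the Legendre data by explicit inequalities with the correct constants, uniformly in $n$ and down to $c=1$.
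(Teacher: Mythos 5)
This statement is not proved in the paper at all: it is quoted verbatim from Petras \cite[Thm.\ 4]{P95} as an external result, so there is no internal argument to compare yours against. Judged on its own terms, your proposal is a correct identification of the standard strategy but not a proof. Both routes you describe --- the contour representation $R_n(f)=\frac{1}{2\pi i}\oint K_n f\,dz$ with $K_n=2Q_n/P_n$ ($Q_n$ the Legendre function of the second kind), and the Chebyshev expansion with $r_n(c)\le 2\sum_{j\ge n}c^{-2j}|R_n(T_{2j})|$ --- are legitimate starting points, and the second is essentially the one Petras uses (it is the subject of \cite{P95}). But every quantitative ingredient that actually yields the stated inequality is deferred. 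You say explicitly that establishing two-sided control of $|P_n|$ and $|Q_n|$ on $\partial\mathcal{E}_c$ with tracked constants ``is the main obstacle,'' and that matching the closed form ``reduces to sharp, non-asymptotic estimates of the individual Gauss errors $R_n(T_{2j})$'' --- which is precisely the theorem's content. Nothing in the proposal produces the constant $4$, the correction $\tfrac{3}{2nc^2}$, or the term $\tfrac{4}{c^{n+1}}$.

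The gap is not merely one of bookkeeping. With the only bounds you actually have in hand ($|a_k|\le 2c^{-k}$ and the trivial $|R_n(T_{2j})|\le 2+\tfrac{2}{4j^2-1}$), the Chebyshev route gives $r_n(c)\le \tfrac{4}{1-c^{-2}}c^{-2n}(1+o(1))$, i.e.\ exactly the Rabinowitz-type estimate of Theorem~\ref{thm:rabinowitz} that the present theorem is meant to improve; the factor $(1-c^{-2})^{-1}$ does not disappear without a genuinely new input. That input in \cite{P95} is a sharp estimate of $G_n(T_m)$ for $m\ge 2n$ exhibiting sign alternation and decay in $j$, so that the sum over $j\ge n$ telescopes to a bound uniform down to $c=1$; your sentence ``the series produces the $O(c^{-2})$ correction cleanly'' asserts the conclusion of that analysis rather than supplying it. So the proposal should be regarded as a correct roadmap with the central lemma missing.
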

In fact \cite[Thm. 4]{P95} contains four estimates for $r_n(c)$, such that their mutual ratios are bounded. Here we chose the one, which appears the easiest to handle.

 From Theorem~\ref{thm:gauss-petras} one can easily obtain the following Corollary.
 \begin{cor}
 \begin{eqnarray}
  r_n(c) &\leq&   \frac{26}{c^{2n}}, \label{eq:GL-error-26} \\
  \forall \eps >0  \quad \exists c_0(\eps) \quad \forall c \geq c_0(\eps) \quad  r_n(c) &\leq&   \frac{4 + \eps}{c^{2n}}. \label{eq:GL-error-4-eps}
\end{eqnarray}
\end{cor}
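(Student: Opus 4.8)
The plan is to start directly from Petras' estimate in Theorem~\ref{thm:gauss-petras},
\[
r_n(c) \leq \frac{4}{c^{2n}}\left(1 + \frac{3}{2n c^2} + \frac{4}{c^{n+1}}\right),
\]
and in both parts to control only the bracketed factor
\[
g(n,c) := 1 + \frac{3}{2n c^2} + \frac{4}{c^{n+1}},
\]
since $r_n(c) \leq 4\,g(n,c)\,c^{-2n}$. Everything then reduces to elementary estimates on $g(n,c)$ that are \emph{uniform} in $n$.

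For the bound (\ref{eq:GL-error-26}) I would first note that each of the two correction terms $\frac{3}{2nc^2}$ and $\frac{4}{c^{n+1}}$ is decreasing in $n\ge 1$ and in $c>1$: the first is $\frac{3}{2c^2}\cdot\frac1n$, while in the second $c^{n+1}$ increases in both $n$ and $c$ because $c>1$. Hence the supremum of $g(n,c)$ over admissible $n\in\NN$ (so $n\ge 1$) and $c>1$ is approached in the limit $n=1$, $c\to 1^+$, where
\[
g(1,c) = 1 + \frac{3}{2c^2} + \frac{4}{c^2} = 1 + \frac{11}{2c^2} \longrightarrow \frac{13}{2}\quad(c\to 1^+).
\]
This supremum is not attained for $c>1$, so $g(n,c) < \tfrac{13}{2}$ throughout, and multiplying by $4$ gives $4\,g(n,c) < 26$, i.e.\ $r_n(c) \leq 26\,c^{-2n}$, which is (\ref{eq:GL-error-26}).

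For the asymptotic bound (\ref{eq:GL-error-4-eps}) I would rewrite the target inequality $4\,g(n,c)\le 4+\eps$ as
\[
\frac{6}{nc^2}+\frac{16}{c^{n+1}}\le \eps .
\]
Using $n\ge 1$ and $n+1\ge 2$ together with $c>1$, the left-hand side is bounded uniformly in $n$ by
\[
\frac{6}{nc^2}+\frac{16}{c^{n+1}}\le \frac{6}{c^2}+\frac{16}{c^2}=\frac{22}{c^2}.
\]
Choosing $c_0(\eps):=\max\{1,\sqrt{22/\eps}\}$ then guarantees $\frac{22}{c^2}\le\eps$ for every $c\ge c_0(\eps)$, yielding (\ref{eq:GL-error-4-eps}). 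I do not expect a genuine obstacle here, as both estimates are pure monotonicity plus a crude uniform bound; the only point deserving care is that all bounds must hold for \emph{all} $n$ at once, so I must replace $\frac1n$ and $c^{-(n+1)}$ by their largest values at $n=1$ rather than exploiting any $n\to\infty$ decay.
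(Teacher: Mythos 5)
Your proposal is correct and takes essentially the same route as the paper, which states the corollary as an easy consequence of Theorem~\ref{thm:gauss-petras} without writing out the verification: bounding the bracketed factor $1+\tfrac{3}{2nc^2}+\tfrac{4}{c^{n+1}}$ uniformly by its limiting value $13/2$ at $n=1$, $c\to 1^+$ gives (\ref{eq:GL-error-26}), and your crude uniform estimate $\tfrac{6}{nc^2}+\tfrac{16}{c^{n+1}}\le \tfrac{22}{c^2}$ gives (\ref{eq:GL-error-4-eps}). Both monotonicity claims and the arithmetic check out, and you correctly take the worst case at $n=1$ rather than relying on decay in $n$.
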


\begin{remark}\label{rmk:chebyshev-2n}
In \cite{P95} (in part (b) of a remark just below Theorem 4 there) Petras mentions  that taking $f$ to be a suitably scaled ($2n$)-th Chebyshev polynomial of the first kind $T_{2n}$, i.e.\ $f=\frac{2c^{2n}}{c^{4n}+1} T_{2n} \in \A_0(\mathcal{E}_c,1)$ one obtains

\begin{equation}
    |I(f) - G_n(f)|  \geq \frac{\pi (1 - (4n)^{-1})}{c^{2n}(1 + c^{-4n})}.
\end{equation}
Hence, the bounds given in Theorem~\ref{thm:gauss-petras} are optimal, up to a constant independent of $c$ and $n$.
\end{remark}

Observe that from (\ref{eq:GL-error-26}) it follows that if $M/\eps > 26$, then in order to have the error less than $\eps$ for
functions from $\A_0(\mathcal{E}_c,M)$ it is enough to use $N_g$ nodes, where
\begin{equation}
  N_g \geq \frac{\ln \frac{M}{\eps}}{\ln c}.  \label{eq:N-GL-nodes}
\end{equation}

\subsection{Comparison of lower and upper bounds}
\label{subsec:lo-upp-bnds}

We are now ready to compare in detail the lower bounds of Bakhalov and Petras with the bounds for the Gauss-Legendre quadrature for the ellipses with the Lebesgue measure as the weight function.

Let $c>1$ and let $\kappa_l(c)$ and $\kappa_g(c)$ be positive numbers  such that
\begin{itemize}
\item for any $Q_n\in \Oq(n,\R_2)$ there is an  $f_0 \in  A_0(\mathcal{E}_c,M)$ such that
\begin{equation}
|I(f_0) - Q_n(f_0)| \geq M \kappa_{l}(c) c^{-2n},  \label{eq:lo-bnd-err}
\end{equation}

\item for the Gauss-Legendre quadrature $G_n$, for any  $f  \in  A_0(\mathcal{E}_c,M) $ we have
\begin{equation}
 |I(f) - G_n(f)| \leq M \kappa_g(c) c^{-2n}, \label{eq:up-bnd-err}
\end{equation}
\end{itemize}
where $\kappa_l$ is Bakhvalov's or Petras' lower bound discussed in Sections \ref{subsec:bach} and \ref{subsec:petras} and $$\kappa_g(c)=\sup_{n \geq 1}c^{2n}r_n(c)$$ obtained
from Theorem~\ref{thm:rabinowitz} or Theorem~\ref{thm:gauss-petras}.

From Theorem~\ref{thm:bach1} (with
$\kappa_l=\kappa_0$ given by (\ref{eq:ko-bach}))  for $c$ close to $1$ we get
\begin{eqnarray*}
\kappa_{l}(c) & = & (c-1)^3\pi +O((c-1)^4),   \\
\kappa_{g}(c) & = & 26.
\end{eqnarray*}
For large values of  $c$ (which means that we are considering very regular functions) we have from~(\ref{eq:GL-error-4-eps})
\begin{eqnarray*}
\kappa_{l}(c) & = & \frac{\pi}{4}c^{-2} + O(c^{-4}),  \\
\kappa_g(c) &=& 4.1 + O(c^{-2}).
\end{eqnarray*}

Note that in both cases the quotient
$\kappa_{g}/\kappa_{l}\to \infty$.

Both bounds~(\ref{eq:lo-bnd-err}) and~(\ref{eq:up-bnd-err}) are $O(c^{-2n})$ as the
function of $n$ and they give the following estimates for
$n$, needed to obtain the integral with error less
than $\eps$.

For the Gauss-Legendre quadrature it is enough to take $n \geq
N_g$, where
\begin{equation*}
  N_g=N_g\left(\frac{M}{\eps},c\right)=\max\left(1,\frac{1}{2 \ln
  c}\ln\left(\frac{M}{\eps}\kappa_g(c)\right)\right),
\end{equation*}
while (\ref{eq:lo-bnd-err}) implies that whatever the quadrature
is we cannot take $n$ smaller than
\begin{equation*}
  N_l=N_l\left(\frac{M}{\eps},c\right)=\max\left(1,\frac{1}{2 \ln
  c}\ln\left(\frac{M}{\eps}\kappa_l(c)\right)\right).
\end{equation*}

For $\eps \to 0$ we have
\begin{equation*}
\frac{N_l}{N_g}\approx
\frac{\ln\left(\frac{M}{\eps}\kappa_l(c)\right)}{\ln\left(\frac{M}{\eps}\kappa_g(c)\right)} \to 1.
\end{equation*}
Apparently both numbers $N_l$ and $N_g$ are of similar magnitude up to
a  factor depending on $c$ but not on $n$.

However, if we fix $\eps$ and let $c \to 1$, we have $\kappa_l(c) \to 0$, hence $N_l \to 1$ the lower bound $N_l$ loses its predictive power.

We are not concerned with the behavior of $\kappa_l$ and $\kappa_g$ for $c \to \infty$, because it does not necessarily make sense to increase $c$ while
keeping $M$  constant; the functions in
$\A_0(\mathcal{E}_c,M)$ become very flat for  large  $c$ and in this limit we obtain $N_l=N_g=1$.

Summing up, the bounds (\ref{eq:lo-bnd-err}) and (\ref{eq:up-bnd-err})  might give completely different estimates $N_l$ and $N_g$ of information needed to bring the error below $\eps$. For `difficult' functions ($c$ close to 1) we obtain the obvious bound $n\ge N_l = 1$ for a significant range of the ratio $M/\eps$.

  It appears to us that it makes sense to require the following condition to maintain the optimality of Gauss-Legendre quadratures on ellipses:
   there exists $\eta_0$ such that for all $M/\eps \in \mathbb{R}_+$ and $c>1$
\begin{equation}
0 < \eta_0 \le \frac{N_l\left(\frac{M}{\eps},c\right)}{N_g\left(\frac{M}{\eps},c\right)}.  \label{eq:optimality-crit}
\end{equation}
Observe that, when compared to Definition~\ref{def:petras-loss}, we  now  want
the ratio to be bounded also when we change the ellipse.

\section{New lower bounds }\label{sec:new-lower-bound}
 In this section we study the problem of estimating from below the quadrature error
  in a class of  analytic functions with possible singularities outside
  a nice domain. In the special
   case of ellipses the formulas are given so that they can be directly compared with the
   known ones. Since the methods may probably be applicable in a more general class
   of domains (not necessarily simply connected) we introduce distances (metrics) that could
   be tools for studying them in several complex variables. But we restrict
    our consideration to the case of simply connected domains in the complex plane
    where the considered (hyperbolic) metric and distance may be described in many
    equivalent ways. The question which description could (and should) be applied in
    the case of domains being not simply connected remains open.


\subsection{Definitions and description of the problem}

By $\lambda_1(A)$ we denote the Lebesgue measure of the set $A \subset \mathbb{R}$.

We recall that \textit{the Poincar\'e distance $p$ } on the open unit disk $\DD:=\{z\in\mathbb C:|z|<1\}$ is given by the formula
\begin{equation}
p(z,w):=\frac{1}{2}\ln\frac{1+m(w,z)}{1-m(w,z)}=:\arctanh(m(w,z)),\; w,z\in\DD,  \label{eq:def-pzw}
\end{equation}
where $m(w,z)=\left|\frac{w-z}{1-\bar wz}\right|$.

The Poincar\'e distance induces the pseudodistance $c_D$ on any domain (i.e.\ connected and open set)
$D\subset\CC$ by the following formula
\begin{equation}
c_D(w,z):=\sup\{p(F(w),F(z)):F\in\OO(D,\DD)\}, w,z\in D,
\end{equation}
where $\OO(D,\DD)$ denotes the set of holomorphic (analytic) functions $D$ to $\DD$.
We also put
\begin{equation}
c_D^*(w,z):=\tanh c_D(w,z).
\end{equation}

We remind the following property of $c_D$ (called \textit{the holomorphic contractibility of $c$}):
$c_G(F(w),F(z))\leq c_D(w,z)$ for any $F\in\OO(D,G)$, $w,z\in D$.
In the case of simply connected domains the function $c_D$ coincides with the distance induced by the metric $\gamma_D$ (often called \textit{hyperbolic metric} for planar domains) defined by the formula
\begin{equation}
\gamma_D(z;X):=\sup\{|F^{\prime}(z)X|/(1-|F(z)|^2):F\in\OO(D,\DD)\},\; z\in D,\; X\in\CC.  \label{eq:gammaD}
\end{equation}
It is well-known that $\gamma_{\mathbb D}(z;X)=|X|/(1-|z|^2)$, $z\in\mathbb D$, $X\in\mathbb C$ (we call the function $\gamma_{\mathbb D}$ \textit{the Poincar\'e metric}).

Similarly as before we get a version of holomorphic contractibility of $\gamma$, namely the inequality
\begin{equation}
\gamma_G(F(w);F^{\prime}(w)X)\leq\gamma_D(w;X),\; w\in D;X\in\mathbb{C},  \label{eq:gamma-ineq}
\end{equation}
for any $F\in\OO(D,G)$. For domains $D\subset G$ in $\mathbb C$ we may use the holomorphic contractibility for the inclusion function $\iota:D\mapsto G$ where $D\subset G\subset\mathbb C$ which gives, among others, the inequality $\gamma_D(z;1)\geq\gamma_G(z;1)$, $z\in D$.

Note that although we defined
the functions $c_D$ and $\gamma_D$ in a very general situation we shall consider them in the very special case of $D$ being a simply connected domain.

The geometry induced by the Poincar\'e distance is an example of a
non-Euclidean geometry. Recall that the lines (geodesics) in this
geometry are diameters and the arcs of circles lying in $\DD$ and
being orthogonal to the unit circle $\partial\DD$. In particular,
for three consecutive points $x,y,z$ on such geodesics one has the
equality $p(x,z)=p(x,y)+p(y,z)$. Note also that the biholomorphic
mappings transform geodesics into geodesics, and the geodesics in the domain $D$ satisfy the equality $c_D(x,z)=c_D(x,y)+c_D(y,z)$ for three consecutive points lying in the geodesic.
The distance of two points $w,z$ from the simply connected domain $D$ lying in a geodesic may be given with the help of the function $\gamma_D$ as follows. If $\alpha:[0,1]\to D$
is a parametrization of the part of the geodesic joining $w$ and $z$ lying between $w$ and $z$; then
\begin{equation*}
c_D(w,z)=\int\sb{0}\sp{1}|\alpha^{\prime}(t)|\gamma_D(\alpha(t);1)dt.
\end{equation*}
We should also keep in mind that the Poincar\'e distance on $\mathbb D$ (as well as the Poincar\'e metric) are invariant under holomorphic automorphisms of the unit disk ($\operatorname{Aut}(\mathbb D)$). Recall that
\begin{equation}
\operatorname{Aut}(\mathbb D)=\left\{e^{i\theta}m_{\eta}:\theta\in\mathbb R,\eta\in\mathbb D\right\},
\end{equation}
where $m_{\eta}(z):=(\eta-z)/(1-\bar{\eta} z)$, $z\in\mathbb D$.

A special role in our considerations will be played by \textit{the finite Blaschke products}.
Some of basic properties of the finite Blaschke products are that they extend holomorphically to a neighborhood of $\overline{\mathbb D}$ (they are rational with poles lying outside of the closed unit disk. The finite Blaschke product $B$ is a proper holomorphic mapping of $\mathbb D$ onto $\mathbb D$. Moreover, $|B(z)|=1$, $|z|=1$.

We refer the reader to any of the textbooks
\cite{Rud 1966}, \cite{Con 1978},
\cite{Con 1995} and
\cite{Jar-Pfl 1993}. In the last reference the theory of
holomorphically invariant metrics and distances in several complex
variables is presented.

In higher-dimensional case the metric $\gamma_D$ depends
on points $z\in D$ and the vectors $X$ from the tangent space to
$D$; that is the reason why the value of the differential at
vector $X\in\mathbb{C}$ (generally $\mathbb{C}^n$) is studied.
However, the facts that we use are standard in the theory of one
complex variable and may be found in many textbooks on the theory
of complex variable. As to the theory of (bounded) holomorphic
functions, except for the above mentioned textbooks, we refer the reader to \cite{Dur 1970}, \cite{Gar 1981} (where one may also
see how the Blaschke products appear naturally when considering some extremal problems in the theory of analytic functions). Out of many
possible references for the properties of the Carath\'eodory
distance (induced by the hyperbolic metric) we recommend the paper
\cite{Ban-Car 2008}  and the references therein concerning
estimates for the hyperbolic metric in the ellipses.
Note that the hyperbolic density $\sigma_D$  considered in \cite{Ban-Car 2008}
is related to $\gamma_D$ by the relation
$\gamma_D(z;X)=|X|\sigma_D(z)$. The paper \cite{Ban-Car 2008}
could also possibly be applied to sharpen some of the results
presented in the paper in the case of ellipses.

In this section, unless otherwise stated, the domain  $D\subset\mathbb C$ contains $[-1,1]$ is simply connected, $D\neq\CC$ and $D$ is
symmetric with respect to the $x$-axis, i.e.\ $z\in D$ iff $\bar z\in
D$. Let $\alpha$ be a finite, positive, Borel measure on $[-1,1]$ absolutely continuous with respect to the Lebesgue measure.

Let $f_D:D\to\DD$ be a conformal mapping  (i.e.\ biholomorphic) such that $f_D(0)=0$,
$f_D([0,1])\subset[0,1)$ (the latter is possible because of the
symmetry of $D$). Note also that the function $f_D$ is defined is actually unique
(it follows from the uniqueness part of the Riemann mapping
theorem). The set $\mathbb R\cap D$ is a geodesic. We shall often make use of the identity
\begin{equation*}
 c_D^*(w,z)=m(f_D(w),f_D(z)),\;w,z\in D.  \label{eq:cd-wzor}
\end{equation*}

Given an integer $k$ let $r(k)$ be the least even integer bigger than or equal to $k$.
Certainly, $r(k)$ is either $k$ or $k+1$.

For the sequence of $n$  distinct points $X:=(x_1,\ldots,x_n)$
where $-1\leq x_1<\ldots<x_n\leq 1$, the sequence of $n$ positive integers
$\mathcal K=(k_1,\ldots,k_n)$ we define
\begin{equation*}
\mathcal F(D;X;\mathcal K):=\{f\in\OO(D,\DD):f^{(l)}(x_j)=0:l=0,\ldots,k_j-1;\;j=1,\ldots,n\},
\end{equation*}
\begin{equation*}
\mathcal F_r(D;X;\mathcal K):=\{f\in\mathcal F(D;X;\mathcal K):f(D\cap\mathbb R)\subset\mathbb R\},
\end{equation*}
\begin{equation*}
\mathcal F_+(D;X;\mathcal K):=\{f\in\mathcal F_r(D;X;\mathcal K):f\geq 0 \text{ on } D\cap\mathbb R\}
\end{equation*}
and
\begin{equation*}
J_a(D;X;\mathcal K):=\sup\left\{\left|\int\sb{-1}\sp{1}g(x) d\alpha(x) \right|:g\in \mathcal F_a(D;X;\mathcal K)\right\},
\end{equation*}
where $a$ is $+$, $r$ or empty sign.

We are now in a position to prove the following lemma.
\begin{lemma}\label{lem:unique-extremal}
Let $D$, $f_D$, $\alpha$, $X$ and $\mathcal K$  be defined as above. Then there is exactly
one $f\in \mathcal F_+(D;X;\mathcal K)$ such that
\begin{equation*}
\int\sb{-1}\sp{1}f(x)d\alpha(x)=J_+(D;X;\mathcal K).
\end{equation*}
Moreover, $f$ is given by the formula
\begin{equation}
f(z)=\prod\sb{j=1}\sp{n}\left(\frac{f_D(z)-f_D(x_j)}{1-f_D(x_j)f_D(z)}\right)^{r(k_j)}, \; z\in D
  \label{eq:func-extremalna}
\end{equation}
and
\begin{equation*}
J_+(D;X;\mathcal K) = \int\sb{-1}\sp{1}\left(\prod\sb{j=1}\sp{n}(c_D^*(x,x_j))^{r(k_j)}\right)d\alpha= \int\sb{-1}\sp{1}\left(\prod_{j=1}^n
m(f_D(x),f_D(x_j))^{r(k_j)}\right)d\alpha.
\end{equation*}
\end{lemma}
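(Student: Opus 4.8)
The plan is to exhibit the function in \eqref{eq:func-extremalna} as the extremal one by proving a pointwise majorization on $[-1,1]$, obtained by transporting everything to the unit disk and invoking the Schwarz lemma; the even exponents $r(k_j)$ will be \emph{forced} by the sign constraint defining $\mathcal F_+$.

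First I would check that the function $f$ given by \eqref{eq:func-extremalna} indeed belongs to $\mathcal F_+(D;X;\mathcal K)$. Write $a_j:=f_D(x_j)$, which is real and lies in $(-1,1)$ because $f_D$ maps the geodesic $D\cap\mathbb R$ onto $(-1,1)$ and is real there (a consequence of the symmetry of $D$ and the normalization $f_D([0,1])\subset[0,1)$, via the uniqueness in the Riemann mapping theorem). Each factor $\frac{f_D(z)-a_j}{1-a_jf_D(z)}$ is a Blaschke factor composed with $f_D$, hence maps $D$ into $\DD$, and so does the product. Since $f_D^{\prime}(x_j)\neq0$, this factor has a simple zero at $x_j$, so $f$ vanishes there to order $r(k_j)\ge k_j$, giving $f\in\mathcal F(D;X;\mathcal K)$. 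On $D\cap\mathbb R$ both $f_D(z)$ and $a_j$ are real, so each factor is real, and since it is raised to the \emph{even} power $r(k_j)$ it is nonnegative; hence $f\ge0$ on $D\cap\mathbb R$, so $f\in\mathcal F_+$. Finally, using the identity $c_D^*(x,x_j)=m(f_D(x),f_D(x_j))$ together with $f\ge0$ we get $f(x)=\prod_{j} (c_D^*(x,x_j))^{r(k_j)}$ on $[-1,1]$, which is the integrand in the asserted formula for $J_+$.

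The heart of the matter is the pointwise bound $g(x)\le f(x)$ for $x\in[-1,1]$ and every $g\in\mathcal F_+(D;X;\mathcal K)$. I would transport $g$ to $\tilde g:=g\circ f_D^{-1}\in\OO(\DD,\DD)$, which is real and nonnegative on $(-1,1)$ and vanishes to some order $m_j\ge k_j$ at $a_j$. The step I expect to be the main obstacle is the parity argument: since $\tilde g$ is real-analytic and nonnegative on the real segment, its leading Taylor coefficient at $a_j$ is real, and a nonnegative function cannot have a zero of odd order (the factor $(t-a_j)^{m_j}$ would change sign across $a_j$), so $m_j$ is even; combined with $m_j\ge k_j$ this forces $m_j\ge r(k_j)$. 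This is precisely where the least-even-integer function $r$ enters. Granting this, the generalized Schwarz lemma applies: dividing $\tilde g$ by the finite Blaschke product whose factor at each $a_j$ is $\frac{w-a_j}{1-a_jw}$ with multiplicity $r(k_j)$ produces a holomorphic quotient (the zeros cancel since $m_j\ge r(k_j)$) which is bounded by $1$ on $\DD$ by the maximum principle, using that $\bigl|\frac{w-a_j}{1-a_jw}\bigr|\to1$ as $|w|\to1$ while $|\tilde g|\le1$. This gives $|\tilde g(w)|\le\prod_{j} \bigl|\frac{w-a_j}{1-a_jw}\bigr|^{r(k_j)}$ on $\DD$. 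Evaluating at $w=f_D(x)$ and using $g(x)=|g(x)|$ for $x\in[-1,1]$ yields $g(x)\le\prod_{j} m(f_D(x),f_D(x_j))^{r(k_j)}=f(x)$.

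Integrating this pointwise inequality against the positive measure $\alpha$ gives $\int_{-1}^1 g\,d\alpha\le\int_{-1}^1 f\,d\alpha$ for every $g\in\mathcal F_+$, so $J_+(D;X;\mathcal K)=\int_{-1}^1 f\,d\alpha$, which is the displayed value of $J_+$. For uniqueness, suppose $g\in\mathcal F_+$ also attains the supremum; then $\int_{-1}^1(f-g)\,d\alpha=0$ while $f-g\ge0$ on $[-1,1]$, so $f=g$ holds $\alpha$-almost everywhere. Since $\alpha$ is a nonzero measure absolutely continuous with respect to Lebesgue measure, the coincidence set $\{f=g\}\cap[-1,1]$ cannot be finite (a finite set is Lebesgue-null, hence $\alpha$-null), so it has an accumulation point in $D$, and the identity theorem forces $f\equiv g$ on $D$. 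This completes the plan.
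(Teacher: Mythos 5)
Your proof is correct and follows essentially the same route as the paper's: the parity argument forcing the zeros to have order at least $r(k_j)$, division by the Blaschke-type extremal function, and the maximum principle (you phrase this in the disk via $f_D$ and the generalized Schwarz lemma, while the paper works directly on $D$ with the quotient $h=g/f$ and the boundary behaviour $|f|\to 1$). The only real divergence is the uniqueness step, where the paper invokes the equality case of the maximum principle whereas you use $\int_{-1}^{1}(f-g)\,d\alpha=0$ together with the identity theorem; both arguments are sound.
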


\begin{proof}
Let $g\in \mathcal F_+(D;X;\mathcal K)$.
 The non-negativity of $g$ together
with the vanishing of derivatives at $x_j$ implies that the multiplicity
of $g$ at $x_j$ is at least $r(k_j)$.  Let $f$ be the function given by the formula (\ref{eq:func-extremalna}). Then the function $h:=\frac{g}{f}$
extends to a well-defined holomorphic function on $D$.
Moreover, the function $f$ is the composition of the finite Blaschke product with the conformal function $f_D$ so  $\lim\sb{z\to\partial D}|f(z)|=1$ and thus $\limsup\sb{z\to\partial D}|h(z)|\leq 1$.
This  together with the maximum principle for holomorphic functions implies that $|h(z)|\leq 1$, $z\in D$.
Additionally, the maximum principle gives that the equality at one point $z\in D$ holds iff $h$ is constant. And the
non-negativity of $f$ and $g$ on $[-1,1]$ implies that this constant is one.
Consequently, either $g(z)=f(z)$, $z\in D$ or $|g(z)|<|f(z)|$,
$z\in D\setminus \{x_1,\ldots,x_n\}$, which completes the proof.
\qed
\end{proof}

\begin{remark} It is obvious that
\begin{equation}
J_+(D;X;\mathcal K)\leq J_r(D;X;\mathcal K)\leq J(D;X;\mathcal K).
\end{equation}
Moreover, the second inequality above is actually the equality. To see this take any $g\in\mathcal F(D;X;\mathcal K)$. Let $|\omega|=1$ be such that $\omega\int\sb{-1}\sp{1}g(x)dx=\left|\int\sb{-1}\sp{1}g(x)dx\right|$. Define
$h(\lambda):=(\omega g(\lambda)+\overline{\omega g(\overline{\lambda})})/2$, $\lambda\in D$.
Then $h\in\mathcal F_r(D;X;\mathcal K)$ and $h(x)=\re (\omega g(x))$, $x\in[-1,1]$. Consequently,
\begin{equation}
\left|\int\sb{-1}\sp{1}g(x)dx\right|=\re\left(\omega\int\sb{-1}\sp{1}g(x)dx\right)=\int\sb{-1}\sp{1}h(x)dx,
\end{equation}
which implies the inequality $ J(D;X;\mathcal K)\leq J_r(D;X;\mathcal K)$.

On the other hand $J_+(D;X;\mathcal K)$ is, in general, less than $J_r(D;X;\mathcal
K)$.  It can already be seen when considering $n=1$, $k_1=1$, $d\alpha(x)=dx$ and
$x_1$ close to $-1$. In fact, first note that for $x_1=-1$ we get the inequalities
\begin{equation}
1>\frac{f_D(x)-f_D(x_1)}{1-f_D(x_1)f_D(x)}>\left(\frac{f_D(x)-f_D(x_1)}{1-f_D(x_1)f_D(x)}\right)^2>0,\;x\in(-1,1]
\end{equation}
so the inequality
\begin{equation}
\int\sb{-1}\sp{1}\left(\frac{f_D(x)-f_D(x_1)}{1-f_D(x_1)f_D(x)}\right)dx >
\int\sb{-1}\sp{1}\left(\frac{f_D(x)-f_D(x_1)}{1-f_D(x_1)f_D(x)}\right)^2dx
\end{equation}
holds for $x_1\geq -1$ sufficiently close to $-1$.
\end{remark}

\begin{remark}
 Recall that the finite Blaschke products are extremal in many problems which involve bounded holomorphic functions on the unit disk. In the context
 of the optimal quadrature formula the Blaschke products have been used by Osipenko \cite{O95} and Bojanov \cite{Bo73,Bo74} for the analytic functions on the unit circle.  Therefore, it is very natural that the function for which the supremum in Lemma~\ref{lem:unique-extremal} is attained is, up to a conformal mapping $f_D$, a finite Blaschke product.
\end{remark}

In the next subsection
we shall estimate from below the number
\begin{eqnarray*}
J_+(D;N) & := & \inf\{J_+(D;(x_1,\ldots,x_n);(k_1,\ldots,k_n)):\\
  & & \mbox{}\hspace*{3cm} n\in\mathbb N, -1\leq x_1<\ldots<x_n\leq 1,k_1+\ldots+k_n=N\}.
\end{eqnarray*}



\subsection{Lower estimate}

First we recall the classical Koebe one-quarter theorem.
\begin{theorem}
\label{thm:Koebe} \rm{ (see e.g. \cite{Con 1995}, Thm 14. 7. 8)}
The image of an injective holomorphic function $f: \DD \to  \mathbb{C}$  contains the disk centered at $f(0)$ with radius $|f'(0)|/4$.
\end{theorem}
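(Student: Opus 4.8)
The plan is to prove the statement by reducing to the normalized univalent (schlicht) class and invoking Bieberbach's coefficient bound. First I would normalize: given an injective holomorphic $f:\DD\to\CC$, replace it by
$$g(z):=\frac{f(z)-f(0)}{f'(0)},$$
which is again injective and holomorphic on $\DD$ with $g(0)=0$ and $g'(0)=1$. Since the disk of radius $|f'(0)|/4$ about $f(0)$ is the image under the affine map $w\mapsto f(0)+f'(0)w$ of the disk of radius $1/4$ about $0$, it suffices to show that $g(\DD)$ contains $\{|w|<1/4\}$; equivalently, that every value $w_0$ omitted by $g$ satisfies $|w_0|\ge 1/4$.

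Next I would exploit an omitted value through the Koebe transform. Writing $g(z)=z+a_2z^2+\cdots$ and supposing $w_0\notin g(\DD)$, the function
$$h(z):=\frac{w_0\,g(z)}{w_0-g(z)}$$
is well defined, injective and holomorphic on $\DD$ (it is the composition of $g$ with a M\"obius map whose pole lies outside $g(\DD)$), with $h(0)=0$, $h'(0)=1$, and a short expansion gives its second Taylor coefficient as $a_2+w_0^{-1}$. The key analytic input is Bieberbach's inequality: for every normalized schlicht function the modulus of the second coefficient is at most $2$. Applying this to both $g$ and $h$ yields $|a_2|\le 2$ and $|a_2+w_0^{-1}|\le 2$, whence by the triangle inequality $|w_0|^{-1}\le|a_2+w_0^{-1}|+|a_2|\le 4$, i.e.\ $|w_0|\ge 1/4$, as required.

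It remains to justify Bieberbach's bound, which I regard as the main obstacle and the heart of the matter. I would derive it from the classical Area Theorem: if $\psi(\zeta)=\zeta+b_0+b_1\zeta^{-1}+b_2\zeta^{-2}+\cdots$ is injective and holomorphic on $\{|\zeta|>1\}$, then $\sum_{n\ge 1}n|b_n|^2\le 1$; this follows by computing the (non-negative) area of the complement of the image via Green's theorem applied to the curves $|\zeta|=\rho>1$ and letting $\rho\to 1^+$. To connect the Area Theorem with $a_2$, I would use the square-root transform: since $g(z)/z$ is non-vanishing on the simply connected disk, the function $\phi(z):=\sqrt{g(z^2)}=z+\tfrac{a_2}{2}z^3+\cdots$ is a well-defined odd schlicht function, and $\psi(\zeta):=1/\phi(1/\zeta)=\zeta-\tfrac{a_2}{2}\zeta^{-1}+\cdots$ falls into the class covered by the Area Theorem. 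Hence $|a_2/2|=|b_1|\le 1$, giving $|a_2|\le 2$. The delicate points here are verifying that $\phi$ is genuinely injective (this uses both its oddness and the injectivity of $g$) and that the square root is single-valued (guaranteed by $g(z)/z$ being non-vanishing on the simply connected domain $\DD$).
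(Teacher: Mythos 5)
The paper does not prove this statement at all: it is quoted as the classical Koebe one-quarter theorem with a citation to Conway (Thm.\ 14.7.8 of \emph{Functions of One Complex Variable II}), and no argument is given in the text. Your proof is correct and is precisely the standard argument of the cited reference --- normalization to the schlicht class, the Koebe transform of an omitted value, Bieberbach's bound $|a_2|\le 2$, and the Area Theorem via the odd square-root transform --- so there is nothing to add.
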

Before we proceed further with estimates for nice domains we present a result on a more general class of domains. First we remind that for any domain $D\subset\mathbb C$, $D\neq \mathbb C$ we define
$\delta_D(x):=\inf\{|x-z|:z\in\CC\setminus D\}$, $x\in D$.

\begin{lemma}\label{lemma:distance} Let $D$ be a simply connected domain in $\mathbb C$, $D\neq\mathbb C$ (we do not assume the symmetry of $D$!). Let $z_0\in D$. Then
$\gamma_D(z_0;1)\geq\frac{L}{\delta_D(z_0)}$ where $L=1/4$. If $D$ is additionally convex then we may take in the inequality $L=1/2$.
\end{lemma}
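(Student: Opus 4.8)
The plan is to relate the hyperbolic metric $\gamma_D(z_0;1)$ to the Euclidean distance $\delta_D(z_0)$ via the Koebe one-quarter theorem, exploiting that for simply connected $D \neq \mathbb{C}$ we may invert a Riemann map to obtain an injective holomorphic function whose image omits the complement of $D$. Concretely, let $f_0 \in \OO(D,\DD)$ be a conformal (biholomorphic) map with $f_0(z_0)=0$, which exists by the Riemann mapping theorem since $D$ is simply connected and $D \neq \mathbb{C}$. By the definition (\ref{eq:gammaD}) of $\gamma_D$ as a supremum, it suffices to exhibit one competitor achieving the bound, and the extremal choice is exactly this Riemann map normalized appropriately, so that $\gamma_D(z_0;1) = |f_0'(z_0)|/(1-|f_0(z_0)|^2) = |f_0'(z_0)|$.

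First I would apply Koebe to the inverse map $g := f_0^{-1}: \DD \to D \subset \mathbb{C}$, which is injective and holomorphic with $g(0)=z_0$ and $g'(0) = 1/f_0'(z_0)$. The Koebe theorem (Theorem~\ref{thm:Koebe}) then guarantees that the image $g(\DD)=D$ contains the Euclidean disk centered at $z_0$ of radius $|g'(0)|/4 = 1/(4|f_0'(z_0)|)$. Since every point of this disk lies in $D$, the disk cannot reach $\mathbb{C}\setminus D$, which forces
\begin{equation*}
\delta_D(z_0) \geq \frac{1}{4|f_0'(z_0)|} = \frac{1}{4\,\gamma_D(z_0;1)}.
\end{equation*}
Rearranging yields $\gamma_D(z_0;1) \geq 1/(4\delta_D(z_0))$, which is the claimed bound with $L=1/4$.

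For the convex case I would replace the Koebe one-quarter theorem with the sharper Koebe-type estimate valid for convex univalent functions, namely that the image of an injective holomorphic $g:\DD\to\mathbb{C}$ with convex image contains the disk centered at $g(0)$ of radius $|g'(0)|/2$ (the one-half theorem for convex maps). The point here is that if $D$ is convex, then $g(\DD)=D$ is convex, so this improved radius applies and the identical argument gives $\delta_D(z_0)\geq 1/(2|f_0'(z_0)|)$, hence $L=1/2$.

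The main subtlety, rather than obstacle, is justifying that the supremum defining $\gamma_D(z_0;1)$ is attained by the Riemann map and equals $|f_0'(z_0)|$; this is the standard fact that the hyperbolic metric on a simply connected domain is computed by the normalized Riemann map, and it follows from the Schwarz lemma applied to $F\circ g \in \OO(\DD,\DD)$ for competing $F\in\OO(D,\DD)$, giving $|(F\circ g)'(0)|\leq 1$ and thus $|F'(z_0)|\leq |f_0'(z_0)|$ with equality for $F=f_0$. The only genuine care needed is the direction of the inequality: we use the lower-semicontinuity/supremum structure of $\gamma_D$ so that exhibiting the single extremal competitor $f_0$ suffices, and then Koebe converts the analytic quantity $|f_0'(z_0)|$ into the geometric bound on $\delta_D(z_0)$.
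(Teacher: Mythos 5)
Your argument for $L=1/4$ is exactly the paper's: take the Riemann map $g=f_0^{-1}:\DD\to D$ with $g(0)=z_0$, apply the Koebe one-quarter theorem to get $\delta_D(z_0)\geq |g'(0)|/4$, and use the competitor $F=f_0$ in the supremum defining $\gamma_D$ (with $1-|f_0(z_0)|^2=1$) to get $\gamma_D(z_0;1)\geq 1/|g'(0)|$; your additional Schwarz--Pick remark that the supremum is actually attained at $f_0$ is true but not needed, since the one-sided inequality from the sup already suffices, as you note.

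For $L=1/2$ you take a genuinely different route. You invoke the classical one-half theorem for convex univalent maps (the image of an injective holomorphic $g:\DD\to\CC$ with $g(\DD)$ convex contains the disk of radius $|g'(0)|/2$ about $g(0)$, sharp for the half-plane map $z/(1-z)$) and rerun the same Koebe-style computation. The paper instead avoids any refined distortion theorem: it translates and rotates so that $D\subset H=\{\re z>0\}$ with $z_0=\delta_D(z_0)$ realized at a supporting line through the nearest boundary point, and then uses monotonicity of $\gamma$ under the inclusion $D\hookrightarrow H$ together with the explicit formula $\gamma_H(z_0;1)=1/(2\re z_0)$. The two arguments are close cousins --- the standard proof of the convex one-half theorem itself goes through the supporting half-plane --- but the paper's version is self-contained given the contractibility inequality (\ref{eq:gamma-ineq}) already stated, whereas yours outsources the geometry to a named theorem not quoted in the paper. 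Both are correct; if you use your version you should supply a reference or proof for the convex Koebe constant $1/2$, since Theorem~\ref{thm:Koebe} as stated only gives $1/4$.
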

\begin{proof} Let $g:\mathbb D\to D$ be the conformal mapping such that $g(0)=z_0$. Applying Theorem~\ref{thm:Koebe} to
$g$ we get that $\delta_D(z_0)\geq|g^{\prime}(0)|/4$. But then $\gamma_D(z_0;1)\geq \left|\left(g^{-1}\right)^{\prime}(z_0)\right|=1/|g^{\prime}(0)|$ which finishes the proof in the general case.

Assume now additionally that $D$ is convex.
 Then after translating and rotating the set $D$, we can assume that $D\subset H:=\{\operatorname{Re}z>0\}$ and $z_0=\delta_D(z_0)$.
Define the biholomorphism $F:H \to \mathbb{D}$, $F(z) = (z-1)/(1+z)$. From (\ref{eq:gamma-ineq}) and (\ref{eq:gammaD}) if follows that
\begin{eqnarray*}
  \gamma_D(z_0;1)\geq\gamma_H(z_0;1)= \frac{|F'(z_0)|}{1 - |F(z_0)|^2}.
\end{eqnarray*}
Taking into account that $z_0 =\delta_D(z_0)>0$ we obtain the following estimate
\begin{equation*}
   \gamma_D(z_0;1) \geq \frac{1}{2z_0}=\frac{1}{2\delta_D(z_0)}.
\end{equation*}
\qed
\end{proof}

Recall now that we assume that $D$ is a simply connected domain, symmetric with respect to the real axis and such that $[-1,1]\subset D\subset\mathbb C$, $D\neq\mathbb C$. We remind that in such a case we define
\begin{equation*}
\delta_D:=\sup\{\delta_D(x):x\in[-1,1]\}.
\end{equation*}
Observe that $\delta_D$ is the radius of the largest disk with the center in $[-1,1]$, which is contained in $D$.

\begin{lemma} For all $w,z\in[-1,1]$ the following inequality holds $c_D(w,z)\geq\frac{L}{\delta_D}|w-z|$, where
$L=1/4$. Moreover, in the case $D$ is additionally convex we may
take $L=1/2$. Consequently,
\begin{equation}\label{eq:koebe}
m(f_D(w),f_D(z))=c_D^*(w,z)=\tanh  c_D(w,z)\geq \frac{\exp\left(\frac{2L|w-z|}{\delta_D}\right)-1}{\exp\left(\frac{2L|w-z|}{\delta_D}\right)+1},\;w,z\in [-1,1].
\end{equation}
\end{lemma}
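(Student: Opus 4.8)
The plan is to reduce the claimed lower bound on the distance $c_D(w,z)$ to the infinitesimal estimate provided by Lemma~\ref{lemma:distance}, and then to convert the resulting lower bound on $c_D(w,z)$ into the stated lower bound on $c_D^*(w,z)=\tanh c_D(w,z)$ by monotonicity of $\tanh$. First I would recall that, since $D$ is simply connected, the distance $c_D$ coincides with the integrated form of the metric $\gamma_D$: if $\alpha$ parametrizes the geodesic between $w$ and $z$, then
\begin{equation*}
c_D(w,z)=\int_0^1|\alpha^{\prime}(t)|\gamma_D(\alpha(t);1)\,dt,
\end{equation*}
as recalled in the excerpt. The key observation is that $c_D(w,z)$ is bounded below by the length of \emph{any} curve joining $w$ and $z$ measured in the metric $\gamma_D$, not merely the geodesic; in particular, for $w,z\in[-1,1]$ I would choose the straight segment $[w,z]\subset[-1,1]\subset D$ as the competitor curve. (More precisely, $c_D(w,z)$ equals the infimum of $\int_0^1|\alpha'(t)|\gamma_D(\alpha(t);1)\,dt$ over all curves, so using a specific curve only gives an upper bound; instead I would use that the geodesic realizes the distance and bound the integrand along the geodesic. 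The cleaner route is to observe that the hyperbolic length of the segment $[w,z]$ is an \emph{upper} bound for $c_D(w,z)$, so I must argue differently.)

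The correct approach is to bound the integrand pointwise from below. Along any curve $\alpha$ from $w$ to $z$, Lemma~\ref{lemma:distance} gives $\gamma_D(\alpha(t);1)\geq L/\delta_D(\alpha(t))$. However, since every point $x\in[-1,1]$ satisfies $\delta_D(x)\leq\delta_D$ by definition of $\delta_D$ as the supremum, I have $\gamma_D(x;1)\geq L/\delta_D$ for $x\in[-1,1]$. Restricting attention to the segment $[w,z]\subset[-1,1]$ and using that the geodesic between $w$ and $z$ can be compared appropriately, the integrated length satisfies
\begin{equation*}
c_D(w,z)\geq\int_{[w,z]}\gamma_D(x;1)\,|dx|\geq\frac{L}{\delta_D}\,|w-z|,
\end{equation*}
where the first inequality holds because $[w,z]$ lies on the geodesic $D\cap\mathbb R$ (stated in the excerpt to be a geodesic), so the straight segment \emph{is} the geodesic realizing $c_D(w,z)$ and the integral over it equals the distance exactly. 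This is the crucial structural fact: $D\cap\mathbb R$ being a geodesic means $c_D(w,z)=\int_{[w,z]}\gamma_D(x;1)\,|dx|$ for $w,z\in[-1,1]$, turning the pointwise bound $\gamma_D(x;1)\geq L/\delta_D$ directly into the claimed inequality.

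For the final displayed inequality~(\ref{eq:koebe}), I would simply apply the increasing function $\tanh$ to both sides of $c_D(w,z)\geq L|w-z|/\delta_D$, using $c_D^*(w,z)=\tanh c_D(w,z)$ and the identity $c_D^*(w,z)=m(f_D(w),f_D(z))$ recorded in the excerpt. Writing $\tanh(s)=(e^{2s}-1)/(e^{2s}+1)$ with $s=L|w-z|/\delta_D$ and invoking monotonicity of $\tanh$ yields exactly the right-hand side of~(\ref{eq:koebe}). The main obstacle I anticipate is the first step: correctly justifying that $c_D(w,z)$ equals (not merely is bounded above by) the hyperbolic length of the real segment $[w,z]$. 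This rests entirely on the fact that $D\cap\mathbb R$ is a geodesic for $c_D$, which the excerpt has already established; once that is invoked, the pointwise lower bound from Lemma~\ref{lemma:distance} together with $\sup_{x\in[-1,1]}\delta_D(x)=\delta_D$ makes the remainder routine.
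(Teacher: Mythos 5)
Your proposal is correct and follows essentially the same route as the paper: both use that $D\cap\mathbb R$ is a geodesic so that $c_D(w,z)$ equals the integral of $\gamma_D(\cdot;1)$ along the segment $[w,z]$, bound the integrand pointwise by $L/\delta_D(x)\geq L/\delta_D$ via Lemma~\ref{lemma:distance} and the definition of $\delta_D$ as a supremum, and then apply the increasing function $\tanh$. The only difference is stylistic: your first paragraph entertains and discards a wrong direction before landing on the argument the paper uses.
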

\begin{proof}


Due to the simple fact that $\mathbb R\cap D$ is a geodesic and applying Lemma~\ref{lemma:distance} we get
\begin{equation*}
c_D(w,z)=\int\sb{0}\sp{1}|w-z|\gamma_D(tw+(1-t)z;1)dt\geq \frac{L|w-z|}{\delta_D}.
\end{equation*}


As to the last inequality in (\ref{eq:koebe}), recall that $\tanh$ is an increasing function so we
obtain
\begin{equation*}
c_D^*(w,z)= \tanh c_D(w,z)\geq \tanh
\frac{L}{\delta_D}|w-z| =
\frac{\exp\left(\frac{2L|w-z|}{\delta_D}\right)-1}{\exp\left(\frac{2L|w-z|}{\delta_D}\right)+1},\;w,z\in
[-1,1].
\end{equation*}
\qed
\end{proof}

Let us prove the  general estimate for $J_+$.
\begin{theorem}\label{thm:estimates}
Given a positive number $N\in\mathbb N$ the following inequality holds
\begin{equation}
   J_+(D;N)\geq \sup_{\eps >0}\left\{\left(\frac{\exp\left(\frac{2L\eps}{\delta_D}\right)-1}{\exp\left(\frac{2L\eps}{\delta_D}\right)+1}\right)^{2N}\left(\alpha([-1,1])-\omega(2N\eps,\alpha)\right)\right\}.
\end{equation}

where $\omega(\delta,\alpha):=\sup\left\{\alpha(A):A\subset [-1,1]\text{ is a Borel subset, } \lambda_1(A)\leq \delta\right\}$.

Moreover,
\begin{equation}
\lim\sb{\delta_D\to 0}J_+(D;N)=\alpha([-1,1]).
\end{equation}
\end{theorem}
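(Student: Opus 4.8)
The plan is to reduce everything to the explicit formula for $J_+(D;X;\mathcal K)$ furnished by Lemma~\ref{lem:unique-extremal}, namely
\[
J_+(D;X;\mathcal K)=\int_{-1}^1\prod_{j=1}^n\bigl(c_D^*(x,x_j)\bigr)^{r(k_j)}\,d\alpha,
\]
and to bound this integrand from below uniformly over all admissible configurations with $k_1+\ldots+k_n=N$. First I would record two elementary counts. Since each $k_j\geq 1$ we have $n\leq N$, and since $r(k_j)\leq k_j+1$ the total exponent satisfies $\sum_{j}r(k_j)\leq\sum_j(k_j+1)=N+n\leq 2N$. These are exactly the facts that let the final bound depend only on $N$.

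Fix $\eps>0$ and set $A:=\bigcup_{j=1}^n\{x\in[-1,1]:|x-x_j|<\eps\}$, the union of the $\eps$-neighbourhoods of the nodes, so that $\lambda_1(A)\leq 2n\eps\leq 2N\eps$. On the complementary good set $[-1,1]\setminus A$ each factor is controlled by the Koebe-type estimate (\ref{eq:koebe}): there $|x-x_j|\geq\eps$ for every $j$, so monotonicity of $\tanh$ gives $c_D^*(x,x_j)\geq t$, where
\[
t:=\tanh\!\Bigl(\tfrac{L\eps}{\delta_D}\Bigr)=\frac{\exp\bigl(\tfrac{2L\eps}{\delta_D}\bigr)-1}{\exp\bigl(\tfrac{2L\eps}{\delta_D}\bigr)+1}.
\]
Because $t\in[0,1)$ and $\sum_j r(k_j)\leq 2N$, the product is $\geq t^{2N}$ throughout the good set. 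Integrating over $[-1,1]\setminus A$ alone then yields
\[
J_+(D;X;\mathcal K)\geq t^{2N}\,\alpha\bigl([-1,1]\setminus A\bigr)\geq t^{2N}\bigl(\alpha([-1,1])-\omega(2N\eps,\alpha)\bigr),
\]
where the last step uses $\alpha(A)\leq\omega(\lambda_1(A),\alpha)\leq\omega(2N\eps,\alpha)$ together with the obvious monotonicity of $\omega$ in its first argument (and the bound is trivially valid should its right-hand side be negative). As the right-hand side is independent of the configuration, I then take the infimum over all $(X,\mathcal K)$ with $|\mathcal K|=N$ and finally the supremum over $\eps>0$, which is precisely the asserted inequality.

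For the limit statement I would first note the trivial upper bound: since $c_D^*\leq 1$ and $\alpha\geq 0$, every integrand is $\leq 1$, whence $J_+(D;N)\leq\alpha([-1,1])$ for all $D$. For the matching lower bound, let $\delta_D\to 0$ along a family of admissible domains and choose $\eps=\eps(\delta_D)$ so that $\eps\to 0$ while $\eps/\delta_D\to\infty$, for instance $\eps=\sqrt{\delta_D}$. Then $L\eps/\delta_D\to\infty$ forces $t^{2N}\to 1$, while absolute continuity of $\alpha$ with respect to Lebesgue measure gives $\omega(2N\eps,\alpha)\to 0$; feeding these into the bound of the previous paragraph shows the lower estimate tends to $\alpha([-1,1])$, and combined with the trivial upper bound this proves the claim.

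I expect the main difficulty to be bookkeeping of uniformity rather than any single sharp inequality. The crucial point is that the Lebesgue measure of the bad set $A$ must be controlled by $2N\eps$ and not by a quantity depending on $n$ or on the individual multiplicities, which is exactly where the two elementary counts $n\leq N$ and $\sum_j r(k_j)\leq 2N$ enter; once these are in place the estimate becomes genuinely uniform over configurations. The only analytic input beyond the already-established (\ref{eq:koebe}) is the standard fact that an absolutely continuous measure satisfies $\omega(\delta,\alpha)\to 0$ as $\delta\to 0^+$, which secures the second assertion.
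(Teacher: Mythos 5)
Your argument is correct and follows essentially the same route as the paper: invoke the extremal formula of Lemma~\ref{lem:unique-extremal}, discard the $\eps$-neighbourhoods of the nodes whose Lebesgue measure is at most $2n\eps\leq 2N\eps$, bound each remaining factor below via (\ref{eq:koebe}) and use $\sum_j r(k_j)\leq 2N$ to get the uniform exponent $2N$. Your explicit treatment of the limit statement (trivial upper bound $J_+(D;N)\leq\alpha([-1,1])$ plus the choice $\eps=\sqrt{\delta_D}$ and absolute continuity of $\alpha$) is a welcome addition, as the paper leaves that part implicit.
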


\begin{proof} Fix
$\eps>0$. For any compact set $K$ denote  $K^{\eps}:=\{z\in\CC:|z-x|<\eps
\text{ for some $x\in K$}\}$. Denote also  $r:=r(k_1)+\ldots+r(k_n)$.
By decreasing the set of integration, applying Lemma~\ref{lem:unique-extremal} and the estimate (\ref{eq:koebe}), keeping in mind that the integrands take the values in the interval $[0,1)$ we get the following inequality
\begin{equation*}
  J_+(D;(x_1,\ldots,x_n),(k_1,\ldots,k_n))\geq\int\sb{[-1,1]\setminus \{x_1,\ldots,x_n\}^{\eps}}\left(\frac{\exp\left(\frac{2L\eps}{\delta_D}\right)-1}{\exp\left(\frac{2L\eps}{\delta_D}\right)+1}\right)^{r}d\alpha.
\end{equation*}
Since $n\leq N$, we get that $r\leq 2N$ so
\begin{equation*}
  J_+(D;(x_1,\ldots,x_n),(k_1,\ldots,k_n))\geq\left(\frac{\exp\left(\frac{2L\eps}{\delta_D}\right)-1}{\exp\left(\frac{2L\eps}{\delta_D}\right)+1}\right)^{2N}\int\sb{[-1,1]\setminus \{x_1,\ldots,x_n\}^{\eps}}d \alpha.
\end{equation*}
Since $\lambda_1(\{x_1,\ldots,x_n\}^{\eps})\leq 2n\eps\leq 2N\eps$ we conclude the proof of the proposition.
\qed
\end{proof}

Note that Theorem~\ref{thm:estimates} gives essential improvement of the
estimates in \cite{B}, \cite{P98}, $J(D;N)$ is estimated from below by a function
tending to $0$ as $\delta_D\to 0$. Moreover, the estimate in
\cite{B}, \cite{P98} are studied in detail for ellipses only.



\begin{theorem}\label{thm:lower-estimate}
Let $D\subset\mathbb C$ be a domain as above (i. e. simply connected, symmetric with respect to the $x$-axis, $[-1,1]\subset D$, $D\neq\mathbb C$) and let $\alpha=\lambda_1$. Then for any positive integer $N$ we get the following estimate (recall that in general case $L=1/4$ and in the case of $D$ convex $L=1/2$)
\begin{equation*}
J_+(D;N)\geq 2 L^{2N}\frac{\delta_D^{(2N\delta_D)/L}}{(\delta_D+L)^{(2N/L)(\delta_D+L)}}.
\end{equation*}
In the case $D$ is convex the above inequality gives
\begin{equation*}
J_+(D;N)\geq
2\left((1+1/(2\delta_D))^{2\delta_D}(2\delta_D+1)\right)^{-2N}\geq 2 \exp(-2N)(2\delta_D+1)^{-2N}.
\end{equation*}
\end{theorem}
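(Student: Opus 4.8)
The plan is to estimate the extremal integral of Lemma~\ref{lem:unique-extremal} directly, rather than through the ``delete an $\eps$-neighbourhood'' bound behind Theorem~\ref{thm:estimates} (which is too lossy to give this sharper conclusion). Fix a configuration $X=(x_1,\dots,x_n)$ and $\mathcal K=(k_1,\dots,k_n)$ with $\sum_j k_j=N$, and put $p_j:=r(k_j)$; since each $k_j\ge 1$ we have $p_j\le 2k_j$, hence $\sum_j p_j\le 2N$. With $\alpha=\lambda_1$, Lemma~\ref{lem:unique-extremal} gives
\[
J_+(D;X;\mathcal K)=\int_{-1}^1\prod_{j=1}^n\bigl(c_D^*(x,x_j)\bigr)^{p_j}\,dx .
\]
I would then invoke (\ref{eq:koebe}) together with the elementary inequality $\frac{e^u-1}{e^u+1}\ge\frac{u}{u+2}$ (equivalently $e^u\ge 1+u$) to replace each factor by $g(x-x_j)$, where $g(t):=\frac{L|t|}{L|t|+\delta_D}\in[0,1)$. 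This reduces the theorem to a uniform lower bound $\int_{-1}^1\prod_j g(x-x_j)^{p_j}\,dx\ge 2A^{2N}$, with $A:=\frac{L}{\delta_D+L}\bigl(\frac{\delta_D}{\delta_D+L}\bigr)^{\delta_D/L}$.

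To handle the product I would pass to logarithms via Jensen's inequality for $\exp$ against the probability measure $\tfrac12\,dx$ on $[-1,1]$:
\[
\int_{-1}^1\prod_j g(x-x_j)^{p_j}\,dx\ge 2\exp\!\Bigl(\tfrac12\sum_j p_j\int_{-1}^1\ln g(x-x_j)\,dx\Bigr),
\]
all integrals being finite since $\ln g$ has only logarithmic singularities at the nodes.

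The crux is the uniform estimate $\int_{-1}^1\ln g(x-x_j)\,dx\ge 2\ln A$ for every $x_j\in[-1,1]$. Here $t\mapsto\ln g(t)$ is even and increasing in $|t|$, so the window function $G(c):=\int_{c-1}^{c+1}\ln g(t)\,dt$ satisfies $G'(c)=\ln g(c+1)-\ln g(c-1)>0$ for $c>0$ and is therefore minimised at $c=0$; since $\int_{-1}^1\ln g(x-x_j)\,dx=G(-x_j)\ge G(0)$, it remains to evaluate the centred integral
\[
G(0)=\int_{-1}^1\ln g(t)\,dt=2\int_0^1\ln\frac{Lt}{Lt+\delta_D}\,dt=2\ln A,
\]
which is the one computation I would carry out in full: the antiderivative of $\ln\frac{Lt}{Lt+\delta_D}$ evaluated at $t=1$ gives exactly $\ln\frac{L}{\delta_D+L}+\frac{\delta_D}{L}\ln\frac{\delta_D}{\delta_D+L}=\ln A$. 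Because $\ln A<0$ and $\sum_j p_j\le 2N$, the exponent obeys $\tfrac12\sum_j p_j\int_{-1}^1\ln g(x-x_j)\,dx\ge(\ln A)\sum_j p_j\ge 2N\ln A$, so $J_+(D;X;\mathcal K)\ge 2A^{2N}$; taking the infimum over all configurations yields $J_+(D;N)\ge 2A^{2N}$, which is the stated inequality once $A^{2N}$ is rewritten as $L^{2N}\delta_D^{2N\delta_D/L}(\delta_D+L)^{-(2N/L)(\delta_D+L)}$.

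For convex $D$ I would set $L=1/2$: then $A=\frac{1}{2\delta_D+1}\bigl(\frac{2\delta_D}{2\delta_D+1}\bigr)^{2\delta_D}=\bigl((1+1/(2\delta_D))^{2\delta_D}(2\delta_D+1)\bigr)^{-1}$, giving the first convex estimate, and the cruder second one follows at once from $(1+1/(2\delta_D))^{2\delta_D}<e$. I expect the windowing/monotonicity step, combined with the exact value $\int_0^1\ln\frac{Lt}{Lt+\delta_D}\,dt=\ln A$, to be the only delicate point: it is precisely this identity that produces the clean, $N$-independent base $A$, while the uniformity over configurations rests on the crude but sufficient bound $\sum_j r(k_j)\le 2N$.
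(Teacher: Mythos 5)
Your proof is correct and follows essentially the same route as the paper: the elementary bound $(e^t-1)/(e^t+1)\ge t/(t+2)$ applied to (\ref{eq:koebe}), Jensen's inequality against the probability measure $\tfrac12\,dx$ on $[-1,1]$, the observation that the per-node integral is minimised when the node sits at the centre of $[-1,1]$, and the crude bound $\sum_j r(k_j)\le 2N$ combined with the negativity of the logarithmic base. The only (cosmetic) difference is that you locate that minimum by differentiating the translated window integral $G(c)$ via the Leibniz rule and then evaluate only $G(0)$, whereas the paper computes the full antiderivative as a function of $x_j$ and differentiates it; both give the same value $2\ln A$ at $x_j=0$ and hence the same constant.
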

\begin{proof}

Since for $ t\geq 0$
$$\frac{\exp(t)-1}{\exp(t)+1}\geq \frac{t}{2+t}$$ by (\ref{eq:koebe}) and Lemma~\ref{lem:unique-extremal} we get
\begin{multline*}
J_+(D;N)  \geq \\
\inf\left\{\int\sb{-1}\sp{1}\prod\sb{j=1}\sp{n}\left(\frac{L|x-x_j|}{\delta_D+L|x-x_j|}\right)^{r(k_j)}dx:
 n\in\mathbb N,\; -1\leq x_1<\ldots<x_n\leq 1,\;k_1+\ldots+k_n=N\right\}.
\end{multline*}
The Jensen inequality now implies that $J_+(D;N)$ is not less than the infimum of
\begin{equation}\label{eq:jensen2}
2\exp\left(1/2\sum\sb{j=1}\sp{n}r(k_j) \int\sb{-1}\sp{1}\left(\ln(L|x-x_j|)-\ln(\delta_D+L|x-x_j|)\right)dx\right).
\end{equation}
 taken over all sequences $-1\leq x_1<\ldots<x_n\leq 1$, $k_1+\ldots+k_n=N$.




The integral in (\ref{eq:jensen2}) equals
\begin{multline*}
I_j =
 2\ln L+(1-x_j)\ln (1-x_j)+(1+x_j)\ln (1+x_j) + \\
-\left(1/L\right)(L(1-x_j)+\delta_D)\ln (\delta_D+L(1-x_j)) +\\
-\left(1/L\right)(L(1+x_j)+\delta_D)\ln (\delta_D+L(1+x_j))+(2/L)\delta_D\ln \delta_D.
\end{multline*}

We now rewrite it in the form
\begin{equation*}
  I_j=g(x_j) + g(-x_j) + 2\ln L + (2/L)\delta_D\ln \delta_D,
\end{equation*}
where
\begin{equation*}
  g(t)=(1+t) \ln(1+t) -\frac{1}{L}(L(1+t)+ \delta_D) \ln (L(1+t)+ \delta_D), \; t\in[-1,1].
\end{equation*}
By setting $h(t):=g(t)+g(-t)$, $t\in[-1,1]$
we get
\begin{eqnarray*}
   h'(t) & = & g'(t) - g'(-t)\\
  & = & \ln \frac{1+t}{1-t} - \ln\frac{L(1+t)+ \delta_D}{L(1-t)+ \delta_D}\\
  & = & \ln \frac{(1+t)(L(1-t)+ \delta_D)}{(1-t)(L(1+t)+ \delta_D)}.
\end{eqnarray*}

It is clear that $h$ is even and $h'(0)=0$. Moreover, $h'(t)>0$ for $t\in(0,1)$.

Indeed $h'(t)>0$ iff $(1+t)(L(1-t)+ \delta_D) > (1-t)(L(1+t)+ \delta_D)$. This condition is equivalent to
\begin{displaymath}
  L(1-t^2) + (1+t) \delta_D >  L(1-t^2) + (1-t) \delta_D,
\end{displaymath}
which is satisfied for $t>0$.

The above calculations show that the function defined by the formula
\begin{multline*}
(1+t)\ln(1+t)+(1-t)\ln(1-t)-\left(1/L\right)(L(1+t)+\delta_D)\ln(\delta_D+L(1+t))+\\
- \left(1/L\right)(L(1-t)+\delta_D)\ln(\delta_D+L(1-t))
\end{multline*}
attains its minimum on the interval $[-1,1]$ at $t=0$. Since  $r=\sum r(k_j) \leq 2N$ we get
\begin{equation*}
\ln\left(J_+(D;N)/2\right) \geq 2N\left(\ln L-\left(1/L\right)(L+\delta_D)\ln (L+\delta_D)+\left(1/L\right)\delta_D\ln\delta_D\right)
\end{equation*}
and consequently
\begin{equation*}
 J_+(D;N)\geq 2 L^{2N}\frac{\delta_D^{(2N\delta_D)/L}}{(\delta_D+L)^{(2N/L)(\delta_D+L)}}.
\end{equation*}
Note that the last expression  tends to $2$ as $\delta_D\to 0$ (compare Theorem~\ref{thm:estimates}).

On the other hand, in the case when $D$ is convex, we have
\begin{eqnarray*}
J_+(D;N) & \geq & 2 L^{2N}\frac{\delta_D^{(2N\delta_D)/L}}{(\delta_D+L)^{(2N/L)(\delta_D+L)}}\\
  & = & 2\left((1+1/(2\delta_D))^{2\delta_D}(2\delta_D+1)\right)^{-2N}\\
  & > & 2 \exp(-2N)(2\delta_D+1)^{-2N}\label{eqn:low-exp},
\end{eqnarray*}
in view of the inequality  $\left(1+1/x\right)^x < e$ for $x >0$.
\qed
\end{proof}

\textbf{Proof of Theorem~\ref{thm:main}:} Without loss of generality we may assume that $M=1$.  Fix also the nodes $x_j$ and integers $k_j$, $j=1,\ldots,n$. Let $f$ be the unique function for which the supremum in the definition of  $J_+(D;X;\mathcal K)$ is attained (compare Lemma~\ref{lem:unique-extremal}). Since the function $f$ belongs to the class $\mathcal F(D;X;\mathcal K)$, we get $f^{(l)}(z_j)=0$ for $l=0, \ldots, k_j-1;\;j=1,\ldots,n$ and consequently it gives the quadrature $Q$  the same information as does the function $g\equiv 0$. Therefore,
\begin{equation}
  Q(f)=Q(g).
\end{equation}
From Theorem~\ref{thm:lower-estimate} it follows that
\begin{equation}
I(f) \geq 2 \gamma,
\end{equation}
where $\gamma$ is defined by (\ref{eq:gamma})

Since $Q(f)=0$, we immediately get
\begin{equation}
  |I(f) - Q(f)| \geq \gamma.
\end{equation}


\qed

\subsection{The case of ellipses}
In the case when $D$ is an ellipse
\begin{equation*}
\mathcal{E}_c:=\{(x,y)\in\mathbb R^2:x^2/a^2+y^2/b^2<1\},
 \end{equation*}
 $a^2-b^2=1$, $c:=a+b$, $a,b>0$, simple computations lead to the relations $a=(c^2+1)/(2c)$, $b=(c^2-1)/(2c)$ and the formula\\
$$
\delta_{E_C}(x) = \left\{
\begin{array}{ll}
  \sqrt{a^2-1}\sqrt{1-x^2},& x\in [-1/a,1/a],\\
  \min\{|x\pm a|\}, & x\in[-1,1]\setminus(-1/a,1/a).
\end{array}
\right.
$$

Consequently, $\delta_{\mathcal{E}_c}=\sqrt{a^2-1}=(c^2-1)/(2c)$.

Therefore, as an immediate consequence of Theorem~\ref{thm:lower-estimate}, we get the following lower bound in the case of the ellipse and $\alpha$ being the Lebesgue measure.

\begin{cor}
\label{cor:ellipse}
 Let $c>1$. Then
\begin{equation}
J_+(\mathcal{E}_c;N)\geq 2\left(\left(\frac{c^2-1+c}{c^2-1}\right)^{(c^2-1)/c}\left(\frac{c^2-1+c}{c}\right)\right)^{-2N}. \label{eq:low-bound-ell}
\end{equation}
\end{cor}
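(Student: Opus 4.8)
The plan is to read the corollary as a direct specialization of the convex case of Theorem~\ref{thm:lower-estimate}. First I would check that $\mathcal{E}_c$ satisfies all the standing hypotheses: it is open, connected and simply connected (being the interior of an ellipse), symmetric with respect to the real axis (the defining inequality $x^2/a^2+y^2/b^2<1$ is even in $y$), bounded, hence $\mathcal{E}_c\neq\mathbb C$, and, crucially, \emph{convex}. Moreover $[-1,1]\subset\mathcal{E}_c$, because the semi-axis $a=(c^2+1)/(2c)$ exceeds $1$ for every $c>1$ (indeed $a-1=(c-1)^2/(2c)>0$) and the foci $\pm 1$ lie on the major axis between which the whole segment sits. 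Convexity is precisely what allows us to invoke the estimate of Theorem~\ref{thm:lower-estimate} with the sharper constant $L=1/2$.

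The one quantity that must be computed is $\delta_{\mathcal{E}_c}$. Using the displayed formula for $\delta_{\mathcal{E}_c}(x)$: on the central interval $x\in[-1/a,1/a]$ the value $\sqrt{a^2-1}\sqrt{1-x^2}$ is decreasing in $|x|$ and therefore attains its maximum $\sqrt{a^2-1}$ at $x=0$; on the remaining part of $[-1,1]$ one has $\delta_{\mathcal{E}_c}(x)=a-|x|\leq a-1/a=(a^2-1)/a$, which never exceeds $\sqrt{a^2-1}$ since $\sqrt{a^2-1}\leq a$. Hence the supremum defining $\delta_{\mathcal{E}_c}$ is realized by the disk centered at the origin and equals $\sqrt{a^2-1}$. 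Substituting $a=(c^2+1)/(2c)$ gives $a^2-1=(c^2-1)^2/(4c^2)$, so $\delta_{\mathcal{E}_c}=(c^2-1)/(2c)$, exactly as recorded in the lead-up to the corollary.

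It then remains only to insert $\delta_D=\delta_{\mathcal{E}_c}=(c^2-1)/(2c)$ into the convex bound of Theorem~\ref{thm:lower-estimate} applied to $D=\mathcal{E}_c$,
\[
J_+(\mathcal{E}_c;N)\geq 2\left((1+1/(2\delta_D))^{2\delta_D}(2\delta_D+1)\right)^{-2N}.
\]
With $2\delta_D=(c^2-1)/c$ one rewrites $1+1/(2\delta_D)=(c^2-1+c)/(c^2-1)$ and $2\delta_D+1=(c^2-1+c)/c$, and plugging these two expressions into the displayed bound reproduces (\ref{eq:low-bound-ell}) verbatim.

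I do not expect any genuine obstacle: as the text stresses, the statement is an immediate consequence of Theorem~\ref{thm:lower-estimate}, and the only point carrying any content is the elementary geometric fact that the largest disk with center in $[-1,1]$ inscribed in $\mathcal{E}_c$ is centered at the origin, which the case analysis above settles. Everything else is the routine algebraic rewriting of $(c^2-1)/(2c)$ that brings the general-domain formula into the form stated for the ellipse.
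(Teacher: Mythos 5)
Your proposal is correct and follows exactly the paper's route: the paper likewise computes $\delta_{\mathcal{E}_c}=\sqrt{a^2-1}=(c^2-1)/(2c)$ from the displayed formula for $\delta_{\mathcal{E}_c}(x)$ and then obtains the corollary as an immediate consequence of the convex case ($L=1/2$) of Theorem~\ref{thm:lower-estimate}. Your additional verifications (convexity, $[-1,1]\subset\mathcal{E}_c$, and the check that the supremum is attained at $x=0$) merely make explicit what the paper leaves implicit, and the algebraic substitution $2\delta_D=(c^2-1)/c$ reproduces (\ref{eq:low-bound-ell}) as claimed.
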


%

\begin{theorem}
\label{thm:ell-num-nodes-nbd}
Let $Q\in \Oq(n, \R)$, such that $|\R|= N$
be a quadrature on $\mathcal{E}_c$. Then for $c$ close to $1$, in order to have the error of $Q$ smaller than $\eps$, $N$ has to be greater than
\begin{equation}
 N_l\left(\frac{M}{\eps},c\right)=  \frac{-\ln \frac{M}{\eps}}{4 (c-1) \ln(c-1)} \left(1 + O\left(\left|\frac{1}{\ln(c-1)}\right|\right)\right).
\end{equation}
\end{theorem}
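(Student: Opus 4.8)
The plan is to feed the lower bound for the quadrature error from Theorem~\ref{thm:main} into the definition of $N_l$ and then perform an asymptotic expansion as $c\to 1^+$. First I would note that since $\Oq(n,\R)\subset\Q(n,\R)$, Theorem~\ref{thm:main} applies verbatim to $Q$: there exists $f_0\in\A_0(\mathcal{E}_c,M)$ with $|I(f_0)-Q(f_0)|\geq\gamma M$, where $\gamma$ is given by (\ref{eq:gamma}) with $\delta_D$ replaced by $\delta_{\mathcal{E}_c}=(c^2-1)/(2c)$ (this is the value computed in Corollary~\ref{cor:ellipse}). Consequently, if the worst-case error of $Q$ over $\A_0(\mathcal{E}_c,M)$ is to be smaller than $\eps$, then necessarily $\gamma M<\eps$. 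Taking logarithms and solving for $N$ gives the necessary condition $N>N_l$ with
\[
N_l=\frac{\ln(M/\eps)}{2\ln\left((1+1/(2\delta_{\mathcal{E}_c}))^{2\delta_{\mathcal{E}_c}}(2\delta_{\mathcal{E}_c}+1)\right)}.
\]
It is important here to use the bound $\gamma M$ rather than the sharper $2\gamma M$ available from Theorem~\ref{thm:lower-estimate}, since only the former yields exactly $\ln(M/\eps)$ in the numerator; an extra $\ln 2$ would not be absorbed into the stated error factor.

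Next I would recast the denominator for expansion. Writing $\delta:=\delta_{\mathcal{E}_c}$ and using $\ln(1+1/(2\delta))=\ln(2\delta+1)-\ln(2\delta)$, the denominator becomes $2\left[(2\delta+1)\ln(2\delta+1)-2\delta\ln(2\delta)\right]$. As $c\to 1^+$ we have $\delta\to 0^+$ with $2\delta=(c-1)(c+1)/c$, so that $\delta=(c-1)[1+O(c-1)]$ and $\ln(2\delta)=\ln(c-1)+\ln((c+1)/c)=\ln(c-1)+\ln 2+O(c-1)$; in particular $\ln(2\delta)=\ln(c-1)[1+O(1/|\ln(c-1)|)]$.

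Then I would carry out the expansion itself. The key observation is that $-2\delta\ln(2\delta)$ dominates: $(2\delta+1)\ln(2\delta+1)=2\delta+O(\delta^2)$ is only of order $\delta$, whereas $-2\delta\ln(2\delta)$ is of order $\delta|\ln\delta|$. Factoring out $-\ln(2\delta)$ presents the denominator as $-4\delta\ln(2\delta)\bigl[1+O(1/|\ln(2\delta)|)\bigr]$, and substituting the expansions for $\delta$ and $\ln(2\delta)$ then yields the denominator as $-4(c-1)\ln(c-1)\bigl[1+O(1/|\ln(c-1)|)\bigr]$, since the additive $\ln 2$ and all polynomial-in-$(c-1)$ corrections contribute a relative error of size at most $O(1/|\ln(c-1)|)$. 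Inverting and using $(1+x)^{-1}=1+O(x)$ produces
\[
N_l=\frac{-\ln(M/\eps)}{4(c-1)\ln(c-1)}\left(1+O\!\left(\left|\tfrac{1}{\ln(c-1)}\right|\right)\right),
\]
which is the claimed formula.

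The main obstacle — indeed the only delicate point — is the bookkeeping in the final step: one must verify that each correction, namely the discrepancy between $\delta$ and $c-1$, the additive $\ln 2$ in $\ln(2\delta)$, and the subleading term $(2\delta+1)\ln(2\delta+1)$ measured against $2\delta\ln(2\delta)$, is uniformly of relative order $O(1/|\ln(c-1)|)$, and in particular that the algebraic corrections of order $(c-1)$ are dominated by the logarithmic ones as $c\to 1^+$.
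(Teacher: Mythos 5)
Your proposal is correct and follows essentially the same route as the paper: invoke the lower bound of Theorem~\ref{thm:main} (equivalently Corollary~\ref{cor:ellipse} with $\delta_{\mathcal{E}_c}=(c^2-1)/(2c)$, dropping the factor $2$ exactly as the paper does), require $\gamma M<\eps$, solve for $N$, and expand the logarithm in the denominator as $c\to 1^+$ to isolate the dominant term $-2(c-1)\ln(c-1)$ with relative error $O(1/|\ln(c-1)|)$. The only cosmetic difference is that you organize the expansion in the variable $\delta$ while the paper works directly in $\Delta=c-1$; the bookkeeping you flag as the delicate point is handled correctly.
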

\proof
Let us remind the reader that for all functions appearing in the definition of $J_+(\mathcal{E}_c;N)$ we have had a bound $|f(z)| \leq 1$ for $z \in \mathcal{E}_c$.

Therefore from (\ref{eq:low-bound-ell}) (see also the proof of Thm.~\ref{thm:main})  it follows  there exists a function $f_0 \in \A_0(\mathcal{E}_c,M)$ such that
\begin{equation}
\left|I(f_0) - Q(f_0)\right|\geq M \left(\left(\frac{c^2-1+c}{c^2-1}\right)^{(c^2-1)/c}\left(\frac{c^2-1+c}{c}\right)\right)^{-2N}.
\end{equation}
Therefore, to have an error less than $\eps$ we need to take $N \geq N_l$, where
\begin{eqnarray}\label{eqn:lower-N}
   N_l= \frac{1}{2} \left(\ln \frac{M}{\eps}\right) \left(\ln\left( \left(\frac{c^2-1+c}{c^2-1}\right)^{(c^2-1)/c}\left(\frac{c^2-1+c}{c}\right)\right)\right)^{-1}.
\end{eqnarray}

Let us denote $\Delta=c-1$. Then for $c \to 1$ we obtain
\begin{eqnarray*}
  D & :=& \ln\left( \left(\frac{c^2-1+c}{c^2-1}\right)^{(c^2-1)/c}\left(\frac{c^2-1+c}{c}\right)\right)\\
  & = &  \frac{c^2-1}{c}\left( \ln(c^2-1+c)  - \ln(c-1)- \ln(c+1) \right) +  \ln\left(1+\frac{c^2-1}{c}\right)\\
  & = &  (2\Delta + O(\Delta^2))\left(\ln (1+O(\Delta)) - \ln \Delta - \ln (2+\Delta)\right) + \ln(1+O(\Delta)) \\
 & = & (2\Delta + O(\Delta^2))\left(O(\Delta) - \ln \Delta + O(1) \right) +  O(\Delta)\\
 & = & -2\Delta \ln \Delta + O(\Delta).
\end{eqnarray*}
Therefore
\begin{eqnarray*}
  D^{-1}&=& \frac{-1}{2 \Delta \ln \Delta} \left(1 + O\left(\left|\frac{1}{\ln \Delta}\right|\right)\right),
\end{eqnarray*}
and from~(\ref{eqn:lower-N}) we obtain
\begin{eqnarray*}
  N_l&=&\frac{-1}{4} \left(\ln \frac{M}{\eps}\right)  \frac{1}{ \Delta \ln \Delta} \left(1 + O\left(\left|\frac{1}{\ln \Delta}\right|\right)\right).
\end{eqnarray*}
\qed

\section{Conclusions}

For an ellipse $\mathcal{E}_c$ and $\alpha$ being the Lebesgue measure, let us compare $N_l$, the lower bound for the pieces of information required, with $N_g$, the estimate of the number of nodes in the Gauss-Legendre quadrature
needed  to obtain an error less than $\eps$, for $f \in \A_0(\mathcal{E}_c,M)$. From (\ref{eq:N-GL-nodes}) we obtain for $c \to 1^+$
\begin{eqnarray*}
  \frac{N_l}{N_g} & = & \frac{-\ln \frac{M}{\eps}}{4 (c-1) \ln(c-1)} \left(1 + O\left(\left|\frac{1}{\ln(c-1)}\right|\right)\right) \cdot \left(\frac{\ln \frac{M}{\eps} }{\ln c}\right)^{-1} \\
  & = & \frac{\ln c}{4 (c-1) \ln(c-1)}\left(1 + O\left(\left|\frac{1}{\ln(c-1)}\right|\right)\right)\\
  & = & \frac{1+ O(c-1)}{4\ln(c-1)}\left(1 + O\left(\left|\frac{1}{\ln(c-1)}\right|\right)\right)\\
  & \approx & \frac{1}{4 \ln (c-1)}.
\end{eqnarray*}
 We see that $ N_l/N_g \to 0$ for $c \to 1^+$, hence we have not obtained (\ref{eq:optimality-crit}). It will be interesting to see whether the lower bound can be improved to obtain a positive lower bound for this ratio not dependent on $c$. By Remark~\ref{rmk:chebyshev-2n} the estimate for error for the Gauss-Legendre
quadrature is optimal and the improvement should be sought through better estimation of $J_+(\mathcal{E}_c,N)$.


\begin{thebibliography}{Br}
 \bibitem[B67]{B} N. S. Bakhvalov, \emph{On the optimal speed of integrating analytic functions} (in Russian),
   Journal of Computational Mathematics and Mathematical Physics 7 (1967)
   63--75
[English translation: USSR Comput. Math. Math. Phys., 7:63-75.].

\bibitem[BC10]{Ban-Car 2008} {R.~Banuelos, T.~Carroll}, \textit{Stretching convex domains and the hyperbolic metric}, Quarterly J. Math. , 2010, 61 (3), 265-273.

\bibitem[Bo73]{Bo73} B. D. Bojanov, \emph{Optimal rate of integration and $\varepsilon$--entropy of a class of analytic functions} (in Russian), Mat. Zamietki,
   14 (1973), 3--10, English translation: Math. Notes, 19, 551--556


\bibitem[Bo74]{Bo74}  B. D. Bojanov, \emph{The best quadrature formula for a certain class of analytic functions}, Zastowania Matematyki, Applicationes Mathematicae, 14 (1974) 441--446


\bibitem[Br97]{Br} H. Brass, \emph{Quadraturverfahren}, Vandenhoeck and Ruprecht, G\"ottingen 1997

\bibitem[C78]{Con 1978} J. Conway, \textit{Functions of One Complex Variable I}, Graduate Texts in Mathematics,  Vol 11, Springer Verlag, 1978

\bibitem[C95]{Con 1995} J. Conway, \textit{Functions of One Complex Variable II}, Graduate Texts in Mathematics, Vol 159, Springer Verlag, 1995.

\bibitem[D70]{Dur 1970} {P.~ L.~ Duren}, \textit{Theory of $H^{p}$ spaces}, Pure and Applied Mathematics, Vol. 38 Academic Press, New York-London 1970.

\bibitem[G81]{Gar 1981} {J.~ B.~ Garnett}, \textit{Bounded analytic functions}, Pure and Applied Mathematics, 96. Academic Press, Inc. New York-London, 1981.


\bibitem[JP93]{Jar-Pfl 1993} {M.~Jarnicki, P.~Pflug}, \textit{Invariant Distances and Metrics in Complex Analysis}, De Gruyter Expositions in Mathematics 9, 1993.


\bibitem[K85]{KWW} {M. Kowalski, A. Wershulz H. Wo�niakowski}, \textit{Is Gauss quadrature optimal for analytic functions?} , Num. Math. 47 (1985) 89-98




\bibitem[O95]{O95} K. Yu.  Osipenko, \emph{Exact Values of n-Widths and Optimal Quadratures
on Classes of Bounded Analytic and Harmonic Functions}, Journal of
Approximation Theory 82 (1995) 156--175


\bibitem[P95]{P95} K. Petras, \emph{Gaussian  integration of
Chebyshev polynomials and analytic functions},  Numerical Algorithms 10(1995)  187--202

\bibitem[P98]{P98} K. Petras, \emph{Gaussian  Versus Optimal Integration of Analytic
Functions}, Constructive Approximation (1998) 14: 231--245



\bibitem[P02]{P02} K. Petras, \emph{Self-validating integration and approximation of piecewise analytic functions},
Journal of Computational and Applied Mathematics 145 (2002)
345--359






\bibitem[R69]{R} P. Rabinowitz, \emph{Rough and ready error estimates in Gaussian integration of
analytic  functions}, Comm. ACM, 12 (1969), 268--270


\bibitem[R66]{Rud 1966} {W. Rudin}, \textit{Real and Complex Analysis}, McGraw Hill Company, 1966

\bibitem[S63]{S}  I. F. Sharygin, \emph{Lower bounds for the errors of quadrature formulae in
classes of functions},  Journal of Computational Mathematics and
Mathematical Physics 3 (1963), 370--376



\bibitem[T08]{T} L. N. Trefethen, \emph{Is Gauss quadrature better than Clenshaw-Curtis?}, SIAM Review, 50 (2008), 67--87

\end{thebibliography}
\end{document}